\newcommand{\cw}{\mathrm{cw}}
\newcommand{\nlcw}{\mathrm{nlcw}}
\newcommand{\tw}{\mathrm{tw}}
\newcommand{\diam}{\mathrm{diam}}
\newcommand{\dist}{\mathrm{dist}}
\newtheorem{theorem}{Theorem}[section]
\newtheorem{lemma}[theorem]{Lemma}
\newtheorem{corollary}[theorem]{Corollary}
\newtheorem{proposition}[theorem]{Proposition}
\newtheorem{definition}{Definition}
\newtheorem{example}{Example}
\newtheorem{problem}{Problem}
\theoremstyle{definition}
\author{Flavia Bonomo\thanks{Depto.~de Computaci\'on, FCEyN, Universidad de Buenos Aires and IMAS-CONICET, Argentina. Email: \texttt{fbonomo@dc.uba.ar}}
\and
Luciano N.\ Grippo\thanks{Instituto de Ciencias, Universidad Nacional de General Sarmiento, Los Polvorines, Argentina. Email: \texttt{lgrippo@ungs.edu.ar}}
\and
Martin Milani\v c\thanks{University of Primorska, UP IAM and UP FAMNIT, Koper, Slovenia. Email: \texttt{martin.milanic@upr.si}.}
\and Mart\'in D.\ Safe\thanks{Instituto de Ciencias, Universidad Nacional de General Sarmiento, Los Polvorines, Argentina. Email: \texttt{msafe@ungs.edu.ar}}}
\date{}
\title{Graph classes with and without powers of bounded clique-width}
\begin{document}
\maketitle

\begin{abstract}
\begin{sloppypar}
We initiate the study of graph classes of power-bounded clique-width,
that is, graph classes for which there exist integers $k$ and $\ell$ such that the $k$-th
powers of the graphs are of clique-width at most $\ell$. We give sufficient and
necessary conditions for this property. As our main results, we characterize graph
classes of power-bounded clique-width within classes defined by
either one forbidden induced subgraph, or by
two connected forbidden induced subgraphs.
We also show that for every positive integer $k$, there exists a graph class such that
the $k$-th powers of graphs in the class form a class of bounded clique-width,
while this is not the case for any smaller power.
\end{sloppypar}
\end{abstract}

{\it Keywords:} Clique-width; power of a graph; hereditary graph class; bounded clique-width

\section{Introduction}

{ The two main notions studied in this paper are the notion of graph powers and the notion of clique-width.
Recall that for a positive integer $k$, the {\it $k$-th power of a graph} $G$ is the graph denoted by $G^k$ and obtained from $G$ by adding to it all edges between pairs of vertices at distance at least $1$ and at most $k$.
Graph powers are basic graph transformations with a number of results about their properties in the literature (see, e.g.,~\cite{MR1686154,MR1379114}). The other main notion of the paper, {\it clique-width}, is a graph parameter, denoted by $\cw(G)$, with
many algorithmic applications when bounded by a constant (see, e.g.,~\cite{MR1739644,MR2379082,MR1973174,MR2272239,MR1948213,MR2274712,MR3197779,MR2080095}).
We study these two notions in the framework of graph classes, that is, sets of graphs closed under isomorphism, paying particular attention to
hereditary graph classes.

For a graph class ${\cal G}$ and a positive integer $k$, the {\em $k$-th power of ${\cal G}$} is the set ${\cal G}^k$ of all $k$-th powers of graphs in ${\cal G}$. The fact that several graph algorithmic problems can be expressed in terms of graph powers (see, e.g.,~\cite{DBLP:conf/isaac/BrandstadtLR12,MR3065112, MR2288329,MR1605685,MR1607726,MR2080095}) motivates the study of graph classes the $k$-th power of which has small clique-width, where $k$ is a fixed positive integer. More specifically, one can study the following properties of graph classes capturing, on a coarse scale,
various dependencies regarding the behavior of the clique-width with respect to graph powers, where,
for a graph class ${\cal G}$, the {\it clique-width} of ${\cal G}$ is defined as $\cw({\cal G}) = \sup\{\cw(G)\mid G\in {\cal G}\}$:
\begin{itemize}
  \item For a pair of positive integers $k$ and $\ell$, we say that ${\cal G}$ is of {\em $(k,\ell)$-power-bounded clique-width}
  if $\cw({\cal G}^k)\le \ell$.
  \item For a positive integer $k$, we say that  ${\cal G}$ is of {\em $(k,\ast)$-power-bounded clique-width}
    if $\cw({\cal G}^k)\le \ell$
    for some positive integer $\ell$.
    \item For a positive integer $\ell$,
  we say that  ${\cal G}$ is of {\em $(\ast,\ell)$-power-bounded clique-width}
  if $\cw({\cal G}^k)\le \ell$
    for some positive integer $k$.
    \item Finally, we say that ${\cal G}$ is of {\em $(\ast,\ast)$-power-bounded clique-width} (or simply:
 {\em of power-bounded clique-width}) if
  $\cw({\cal G}^k)\le \ell$
    for some pair of positive integers $k$ and $\ell$.
\end{itemize}

In the above terminology, a graph class ${\cal G}$ is of bounded clique-width if and only if it is of $(1,\ast)$-power-bounded clique-width. Thus, the above framework can be seen as a graph-powers-oriented extension of the notion of graph classes of bounded clique-width.
We should emphasize that, while characterizing graph classes with respect to boundedness / unboundedness of their clique-width is important for algorithmic purposes, this is not a simple task, as proving lower bounds on the clique-width of a given graph class can be quite challenging.
For instance, while it is known that the class of $H$-free graphs is of bounded clique-width if and only if $H$ is an induced subgraph of the $4$-vertex path~\cite{DBLP:conf/lata/DabrowskiHP15}, a complete dichotomy
for graph classes defined by {\it two} forbidden induced subgraphs, say $H$ and $H'$, is still not known, even in the case when both $H$ and $H'$ are connected~\cite{CIACDabrowskiP14a, DBLP:conf/lata/DabrowskiHP15}. Moreover, given a graph $G$ and an integer $k$, it is {\sf NP}-complete to determine if the clique-width of $G$ is at most $k$~\cite{MR2519936}, while for fixed values of $k$ polynomial-time algorithms are known only for $k \le 3$~\cite{MR2901093}.

We thus propose the study of graph classes of $(k,\ell)$-power-bounded clique-width, which, as indicated above, can be useful for algorithmic purposes also for $k>1$. The difficulties of understanding the corresponding graph classes for fixed values of $k$ and $\ell$ motivate the
introduction of the more relaxed properties of $(k,\ast)$-, $(\ast, \ell)$, and $(\ast, \ast)$-power-bounded clique-width.
We expect that relaxing one or both of the two parameters to be unconstrained might lead to more tractable cases in terms of proving dichotomy results. This seems to be indeed the case, as certified for instance by the complete characterization of graph classes of
power-bounded clique-width within classes defined by two connected forbidden induced subgraphs, which we prove in this paper (Theorem~\ref{thm:bigenic classes-characterization}).
At the same time, we expect that research leading to such results will also lead to discoveries of new structural properties of the graph classes under consideration.

We now summarize our main results and connect them with some known results from the literature. We focus mainly on the last, most relaxed property, that is, on graph classes of power-bounded clique-width. Several of our results also have implications for the other three properties.

First, we observe that several well-known graph classes, including grids, bipartite permutation graphs, unit interval graphs, and hypercube
graphs, are of power-unbounded clique-width. In particular, this implies that none of these graph classes is of $(k,\ell)$-,
$(k,\ast)$-, or $(\ast,\ell)$-power-bounded clique-width, for any positive integers $k$ and $\ell$, thus
strengthening the known fact that these graph classes are of unbounded clique-width.

Second, for every positive integer $k$, we construct a graph class ${\cal G}$ such that
the power class ${\cal G}^k$ is of bounded clique-width, while this is not the
case for any smaller power. This implies that the families of $(k,\ast)$-power-bounded classes are all pairwise distinct
and further motivates the study of these properties.

Third, we prove a sufficient condition for power-boundedness of the clique-width, generalizing
the simple observation that every graph class of bounded diameter is of power-bounded
clique-width. 
Informally speaking, the condition states that for every class ${\cal G}$ of graphs of bounded diameter, the
class of graphs arising from graphs in ${\cal G}$ by subdividing (arbitrarily many times) a
bounded number of edges is of power-bounded clique-width.

Finally, using the above condition, we develop our main result:
a complete characterization of graph classes of
power-bounded clique-width within classes defined by two connected forbidden induced subgraphs (Theorem~\ref{thm:bigenic classes-characterization}).
As remarked above, this result contrasts with the case of graph classes of bounded clique-width, that is, of $(1,\ast)$-power-bounded clique-width, where a dichotomy for graph classes defined by two connected induced subgraphs is (at the time of this writing) still not known.
We also characterize graph classes of power-bounded clique-width within hereditary graph classes defined by a single forbidden induced subgraph
(Theorem~\ref{thm:monogenic}), thus extending the analogous characterization for graph classes of bounded clique-width.}

The rest of the paper is structured as follows. In Section~\ref{sec:notation} we review the necessary preliminaries and basic definitions.
In Section~\ref{sec:basic} we formally introduce the central notion of the paper,
the power-(un)boundedness of the clique-width, obtain initial insight into this notion, and develop
results that we use in later sections.
In Section~\ref{sec:split}, we construct graph classes of power-bounded clique-width
that require taking arbitrarily large powers in order to produce a graph class of bounded clique-width.
In Section~\ref{sec:sufficient}, we prove a sufficient condition for power-bounded clique-width.
Section~\ref{sec:hereditary} is devoted to results about power-boundedness of the clique-width in hereditary graph classes.
We conclude the paper with a discussion in Section~\ref{sec:conclusion}.

\section{Preliminaries}\label{sec:notation}

All graphs in this paper are finite, simple and undirected. Graph terminology not defined here can be found in
\cite{West}.

\bigskip
\noindent {\bf Graphs and graph classes.} Given a graph $G$, an
{\it independent set} in $G$ is a set of pairwise non-adjacent
vertices, and a {\it clique} is a set of pairwise adjacent
vertices. Given two graphs $G$ and $H$, graph $H$ is said to be an
{\it induced subgraph} of $G$ if it can be obtained from $G$ by a
sequence of vertex deletions, a {\it subgraph} of $G$ if it can be
obtained from $G$ by a sequence of vertex and edge deletions, and
a {\it minor} of $G$ if it can be obtained from $G$ by a sequence
of vertex deletions, edge deletions, and edge contractions. If an
induced subgraph of $G$ is isomorphic to a graph $H$, we say that
$G$ {\it contains} $H$ (as an induced subgraph). For a subset of
vertices $X\subseteq V(G)$, we will denote by {$G-X$ the graph
obtained from $G$ by deleting from it the vertices in $X$, and} by
$G[X]$ the subgraph of $G$ induced by $X$, that is, $G[X] =
G-(V(G)\setminus X)$. For two vertices $x,y$ in a connected graph
$G$, we denote by $\dist_G(x,y)$ the {\it distance between $x$ and
$y$}, that is, the length (number of edges) of a shortest
$x,y$-path in $G$. The {\it diameter} of a connected graph $G$ is
defined as $\diam(G) = \max_{x,y\in V(G)}\dist_G(x,y)$, and we
define the diameter of a disconnected graph $G$ to be the maximum
diameter of a connected component of $G$. By $P_n$, $C_n$, and
$K_n$, we denote the path, the cycle, and the complete graph on
$n$ vertices, respectively. For two vertex-disjoint graphs $G_1$
and $G_2$, the {\em disjoint union} of $G_1$ and $G_2$ is the
graph $(V(G_1)\cup V(G_2),E(G_1)\cup E(G_2))$. The disjoint union
of $k$ graphs isomorphic to a graph $H$ will be denoted by $kH$.
The {\it complement} of a graph $G = (V,E)$ is the graph
$\overline{G}$ with the same vertex set as $G$, in which two
distinct vertices are adjacent if and only if they are
non-adjacent in $G$. A graph is said to be {\it co-connected} if
its complement is connected.
The {\it treewidth} {of a graph $G$ is} denoted by $\tw(G)$.
We refer to~\cite{MR1647486} for several equivalent
characterizations.

A {\it graph class} is a set of graphs {that is} closed under isomorphism.
Given a
graph class ${\cal G}$, we say that ${\cal G}$ is {\em of bounded degree} if
$\sup\{\Delta(G)\mid G\in {\cal G}\}<\infty$, {and {\em of bounded diameter} if
$\diam({\cal G})= \sup\{\diam(G)\mid G\in {\cal G}\}<\infty$.}
For a set ${\cal F}$ of
graphs, we say that a graph $G$ is {\it ${\cal F}$-free} if no
induced subgraph of $G$ is isomorphic to a member of ${\cal F}$. Similarly, for
a graph $H$, we say that $G$ is $H$-free if it is $\{H\}$-free.
The set of all ${\cal F}$-free graphs will be denoted by ${\it
Free}({\cal F})$. A graph class is {\it hereditary} if it is
closed under taking induced subgraphs. {A} graph class ${\cal G}$ is hereditary if
and only if ${\cal G}={\it Free}({\cal F})$ for some set ${\cal
F}$ of graphs. An important family of hereditary {graph} classes is the
family of minor-closed {classes} (i.e., graph classes closed under
taking minors). For graph classes not defined in this paper, we
refer to~\cite{MR1686154}.

\bigskip
\noindent{\bf Clique-width.}~\cite{MR1217156} The {\em clique-width} of a graph $G$, denoted $\cw(G)$, is the
minimum number of different labels needed to construct a vertex-labeled graph isomorphic to
$G$ using the following four operations: (i) Creation of a new vertex $v$ with label $i$; (ii) Disjoint union of two labeled graphs $G$ and $H$; (iii) Joining by an edge each vertex with label $i$ to each vertex with label $j$ (for some pair of different labels $i$ and $j$); (iv)
{R}elabeling each vertex with label $i$ with label $j$. {E}very graph can be
built using the above four operations.

Given a graph class ${\cal G}$, the clique-width of ${\cal G}$ is
$\cw({\cal G})= \sup\{\cw(G)\mid G\in {\cal G}\}\,.$ We say that
${\cal G}$ is {\em of bounded clique-width} if $\cw({\cal
G})<\infty$ (and {\em of unbounded clique-width}, otherwise).

We will often make use of the following basic property of the clique-width.

\begin{proposition}[Johansson~\cite{MR1676494}, Courcelle-Olariu~\cite{MR1743732}]\label{prop:induced}
If $H$ is an induced subgraph of {a graph} $G$ then
$\cw(H)\le\cw(G)$.
\end{proposition}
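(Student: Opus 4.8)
The plan is to use the operational definition of clique-width directly: take an optimal construction of $G$ and simply ``forget'' the vertices that are not in the induced subgraph. Concretely, let $k=\cw(G)$ and fix a $k$-expression $t$ that builds $G$ together with some labeling $\ell\colon V(G)\to\{1,\dots,k\}$, using only operations (i)--(iv) from the definition; write $\oplus$ for the disjoint union (ii), $\eta_{i,j}$ for the join operation (iii), and $\rho_{i\to j}$ for the relabeling (iv). Since every induced subgraph of $G$ equals $G[X]$ for some $X\subseteq V(G)$, it suffices to prove: for every $X\subseteq V(G)$ there is a $k$-expression $t_X$ constructing $G[X]$ together with the restricted labeling $\ell|_X$ (with the convention that $t_X$ is the empty expression, and the statement vacuous, when $X=\emptyset$; note the null graph trivially satisfies $\cw\le k$). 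Applying this with $X=V(H)$ then gives $\cw(H)\le k=\cw(G)$.

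I would prove the claim by induction on the structure of $t$, distinguishing the four possible outermost operations. If $t$ creates a single vertex $v$ with label $i$, put $t_X:=t$ when $v\in X$ and $t_X:=\varnothing$ otherwise. If $t=t_1\oplus t_2$, builds graphs $G_1,G_2$, apply the inductive hypothesis to $t_1$ with $X_1:=X\cap V(G_1)$ and to $t_2$ with $X_2:=X\cap V(G_2)$, and let $t_X$ be the disjoint union of the resulting expressions, omitting an operand whose index set is empty. If $t=\rho_{i\to j}(t_1)$, set $t_X:=\rho_{i\to j}\big((t_1)_X\big)$; if $t=\eta_{i,j}(t_1)$, set $t_X:=\eta_{i,j}\big((t_1)_X\big)$. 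In every case $t_X$ uses only labels in $\{1,\dots,k\}$, hence is a $k$-expression.

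The only point that really needs care --- and the crux of the verification --- is showing that $t_X$ produces \emph{exactly} $G[X]$ with labeling $\ell|_X$, i.e.\ neither too few nor too many edges. This is handled by the invariant, carried through the induction, that each vertex of $X$ holds the same label in $t_X$ at every stage as it does in $t$ at the corresponding stage. Granting this invariant, when a join $\eta_{i,j}$ is applied inside $t_X$ it adds precisely those edges of $G[X]$ between the vertices then labeled $i$ and $j$: no spurious edges, since only vertices of $X$ are present, and no missing edges, since their labels are unchanged. Thus $t_X$ adds exactly the edges that $t$ adds between vertices of $X$, and since $G[X]$ consists precisely of the edges of $G$ with both ends in $X$, the expression $t_X$ builds $G[X]$ with labeling $\ell|_X$, completing the induction and hence the proof. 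The main obstacle is purely bookkeeping: maintaining the label-preservation invariant cleanly across the disjoint-union and vertex-creation cases, including the degenerate empty-subset situations.
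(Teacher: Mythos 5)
Your proof is correct. The paper does not prove this proposition at all --- it is quoted as a known result with citations to Johansson and to Courcelle--Olariu --- so there is no in-paper argument to compare against; your structural induction on the $k$-expression (restricting to $X=V(H)$, preserving labels, and checking that each $\eta_{i,j}$ then inserts exactly the edges of $G[X]$ created at the corresponding stage) is the standard proof of this fact, and the care you take with the label-preservation invariant and the degenerate empty-operand cases is exactly where the bookkeeping lives.
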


We will make use of the following fact in Sections~\ref{sec:split}
and~\ref{sec:hereditary}.

\begin{proposition}[Kami{\'n}ski {et al.}~\cite{MR2536473}]\label{prop:complementation}
If ${\cal G}$ is a graph class of unbounded clique-width, then the
class of graphs obtained from graphs in ${\cal G}$ by applying a
constant number of
operations of
replacing an induced subgraph of $G$ with its complement
is also of unbounded clique-width.
\end{proposition}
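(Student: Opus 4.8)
The plan is to route the argument through rank-width, which behaves more smoothly than clique-width under complementation-type operations. Recall the well-known sandwich inequalities $\mathrm{rw}(G)\le\cw(G)\le 2^{\mathrm{rw}(G)+1}-1$ relating rank-width and clique-width (Oum and Seymour); consequently a graph class has bounded clique-width if and only if it has bounded rank-width, so it suffices to prove the statement with ``clique-width'' replaced everywhere by ``rank-width''. Moreover, applying a constant number $c$ of the operations in question means iterating at most $c$ times the basic operation $G\mapsto G\ast S$ that replaces $G[S]$ by $\overline{G[S]}$ while leaving all edges incident to a vertex of $V(G)\setminus S$ intact; so it is enough to control how one such operation affects rank-width and then iterate. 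It is also worth recording that the operation $G\mapsto G\ast S$ is an involution, i.e.\ $(G\ast S)\ast S=G$, since complementing a $0/1$ block twice is the identity.

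The key lemma I would prove is: for every graph $G$ and every $S\subseteq V(G)$, $\mathrm{rw}(G\ast S)\le\mathrm{rw}(G)+1$. To see this, fix an optimal rank-decomposition $(T,\mu)$ of $G$ and use the \emph{same} decomposition for $G\ast S$; this is legitimate since a rank-decomposition is just a subcubic tree whose leaves are the vertices and does not depend on the edge set. Each edge of $T$ induces a bipartition $(A,V(G)\setminus A)$ of $V(G)$, whose cut-rank is the $\mathrm{GF}(2)$-rank of the $A\times(V(G)\setminus A)$ submatrix $M_A$ of the adjacency matrix. Passing from $G$ to $G\ast S$ flips exactly the entries indexed by $(A\cap S)\times((V(G)\setminus A)\cap S)$, so the new cut-matrix is $M_A+N_A$ over $\mathrm{GF}(2)$, where $N_A$ is the all-ones matrix on rows $A\cap S$ and columns $(V(G)\setminus A)\cap S$ and zero elsewhere. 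As $N_A$ is the outer product of two $\{0,1\}$-vectors, $\mathrm{rank}_{\mathrm{GF}(2)}(N_A)\le 1$, hence the cut-rank changes by at most $1$; since this holds for every edge of $T$, the width of $(T,\mu)$ as a rank-decomposition of $G\ast S$ is at most $\mathrm{rw}(G)+1$.

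Finally I would assemble the proof. Let $c$ be the bound on the number of complementations defining the class ${\cal G}'$ obtained from ${\cal G}$, and suppose for contradiction that $\cw({\cal G}')\le\ell$; then $\mathrm{rw}(G')\le\ell$ for every $G'\in{\cal G}'$. Given any $G\in{\cal G}$, applying some sequence of at most $c$ operations $\ast S_i$ to $G$ produces a graph $G'\in{\cal G}'$, and by the involution property $G$ is recovered from $G'$ by applying the subsets $S_i$ in reverse order, again at most $c$ operations. Iterating the key lemma gives $\mathrm{rw}(G)\le\mathrm{rw}(G')+c\le\ell+c$, whence $\cw(G)\le 2^{\ell+c+1}-1$. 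Thus $\cw({\cal G})\le 2^{\ell+c+1}-1<\infty$, contradicting the hypothesis that ${\cal G}$ has unbounded clique-width; therefore $\cw({\cal G}')=\infty$, as claimed.

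The step I expect to be the main obstacle is the key lemma, specifically the bookkeeping in the cut-matrix computation: one must check that the perturbation $N_A$ is supported exactly on $(A\cap S)\times((V(G)\setminus A)\cap S)$ and has $\mathrm{GF}(2)$-rank at most $1$, and that reusing the rank-decomposition of $G$ for $G\ast S$ is valid. A direct argument entirely in terms of clique-width expressions is conceivable as well---for instance by roughly doubling the label set so as to track membership in $S$---but it is considerably more delicate, because complementation within $S$ is a global operation on $S$ that is not naturally produced by the incremental edge-adding operation of a clique-width expression; passing through rank-width is precisely what makes the proof clean.
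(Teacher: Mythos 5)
Your proof is correct. Note first that the paper does not actually prove this proposition: it is quoted from the survey of Kami\'nski, Lozin and Milani\v{c} \cite{MR2536473}, so there is no in-paper argument to compare against. Your self-contained argument via rank-width is the standard clean route and all the steps check out: the operation $G\mapsto G\ast S$ changes the $A\times(V(G)\setminus A)$ cut-matrix by adding, over $\mathrm{GF}(2)$, the outer product of the indicator vectors of $A\cap S$ and $(V(G)\setminus A)\cap S$, which has rank at most $1$, so reusing the same rank-decomposition tree gives $\mathrm{rw}(G\ast S)\le\mathrm{rw}(G)+1$; the involution $(G\ast S)\ast S=G$ lets you run the bound backwards from ${\cal G}'$ to ${\cal G}$; and the Oum--Seymour inequalities $\mathrm{rw}(G)\le\cw(G)\le 2^{\mathrm{rw}(G)+1}-1$ transfer unboundedness between the two parameters. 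The only point worth making explicit is the reading of the statement that your contradiction requires, namely that \emph{every} graph of ${\cal G}$ gives rise to at least one graph of ${\cal G}'$ via at most $c$ operations (this is the intended reading, and the one used in the paper's application to complements of grids). Your closing remark is also apt: a direct manipulation of clique-width expressions is awkward here precisely because complementing inside $S$ is not an incremental edge-join operation, which is why proofs of such ``bounded perturbation'' results in the literature typically pass through rank-width or NLC-width.
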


\bigskip
\noindent{\bf Modules.} A subset $M$ of vertices in a graph $G$ is said to be a {\it module}
if every vertex $v\in V(G)\setminus M$ is either adjacent to all vertices in $M$, or non-adjacent to all vertices of $M$.
A module is said to be {\it trivial} if $M= V$ or $|M|\le 1$, and a graph $G$ is
{\it prime} if it does not contain any nontrivial module.
Given a partition $\Pi$ of the vertex set of a graph $G$ into modules,
the {\em quotient graph of $G$ with respect to $\Pi$} is defined as the
graph obtained from $G$ by
{replacing}
the sets in $\Pi$ {with} single vertices {and connecting two vertices by en edge if and only if the corresponding
two sets in $\Pi$ are connected by an edge in $G$}.
If $G$ is connected and co-connected, then its vertex set admits a unique partition into pairwise disjoint maximal modules~(see, e.g.,~\cite{MR549499}).
Moreover, the corresponding quotient graph is always prime.

\begin{sloppypar}
\begin{proposition}[Courcelle-Olariu~\cite{MR1743732}]\label{prop:cwd-prime2}
For every graph $G$
we have $\cw(G) = \max\{\cw(H)\mid H$ is a prime induced subgraph
of $G\}$.
\end{proposition}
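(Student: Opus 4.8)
The plan is to prove the two inequalities separately. Write $m=\max\{\cw(H)\mid H\text{ is a prime induced subgraph of }G\}$; since $K_1$ is prime this is well defined and at least $1$ for nonempty $G$, and Proposition~\ref{prop:induced} gives $\cw(G)\ge m$ at once. For the reverse inequality I would induct on $|V(G)|$, following the modular decomposition of $G$. The base case $|V(G)|\le 1$ is immediate, so suppose $|V(G)|\ge 2$ and distinguish the usual three cases. If $G$ is disconnected with components $C_1,\dots,C_t$ ($t\ge 2$), then $G=\overline{K_t}[C_1,\dots,C_t]$. If $G$ is connected but $\overline G$ is disconnected, then $G$ is a join $C_1\oplus\cdots\oplus C_t=K_t[C_1,\dots,C_t]$ with $t\ge 2$. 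If $G$ and $\overline G$ are both connected, then, by the modular decomposition facts recalled before the statement, $V(G)$ has a unique partition into maximal modules $M_1,\dots,M_k$ with $k\ge 2$, the quotient $Q=G/\{M_1,\dots,M_k\}$ is prime, and $G=Q[G[M_1],\dots,G[M_k]]$. In every case $G$ is obtained by substituting strictly smaller induced subgraphs of $G$ into a ``pattern'' graph whose clique-width is controlled: the parts $C_i$ (resp.\ $G[M_i]$) are induced subgraphs of $G$ on fewer vertices, hence by induction have clique-width at most $m$ (their prime induced subgraphs are prime induced subgraphs of $G$); $\cw(\overline{K_t})=1$; $\cw(K_t)=2$, and in the join case $m\ge 2$ because $G$ then contains an induced $K_2$, which is prime; and in the third case $Q$, being prime and isomorphic to the induced subgraph of $G$ obtained by choosing one representative from each $M_i$, satisfies $\cw(Q)\le m$.

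The engine behind all three cases is the following substitution lemma: if $G=H[G_1,\dots,G_n]$ is obtained from a graph $H$ on vertex set $\{1,\dots,n\}$ by substituting $G_i$ into vertex $i$, then $\cw(G)\le\max\{\cw(H),\cw(G_1),\dots,\cw(G_n)\}$. Granting it, the three cases yield $\cw(G)\le\max\{1,m\}=m$, $\cw(G)\le\max\{2,m\}=m$, and $\cw(G)\le\max\{\cw(Q),m\}=m$, respectively, which closes the induction. To prove the lemma I would set $k$ equal to the claimed bound and exploit that a $k$-expression can be regarded as a parse \emph{tree}, whose subtrees are built independently: first build each $G_i$ in its own subtree using $\le\cw(G_i)\le k$ labels and then, still inside that subtree, relabel all of $V(G_i)$ to label $1$, obtaining a copy of $G_i$ all of whose vertices carry label $1$. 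Next take an optimal ($\le k$-label) expression for $H$ and replace the leaf that creates vertex $i$ of $H$, say with label $c_i$, by this copy of $G_i$ followed by relabeling label $1$ to $c_i$. Each inserted copy of $G_i$ then stays confined to a single label class throughout the rest of the construction (relabelings move it as a block, and a join between two distinct labels never creates edges inside a block), so the expression produces exactly $H[G_1,\dots,G_n]$ and uses only the labels of the $H$-expression together with those used inside the subtrees --- at most $k$ in all.

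The one genuinely load-bearing point is getting $\max\{\cw(H),\cw(G_i)\}$ in the substitution lemma rather than a weaker additive bound such as $\cw(H)+\max_i\cw(G_i)$; the hard part is to see that constructing the substituted parts as \emph{separate subtrees} lets each of them reuse the full label budget without interfering with the others, so that nothing accumulates. Everything else is bookkeeping on the modular decomposition tree, relying only on the standard facts --- uniqueness of the maximal-module partition of a connected, co-connected graph and primeness of the associated quotient --- that are recalled in the excerpt.
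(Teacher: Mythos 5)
The paper does not actually prove this proposition---it imports it from Courcelle and Olariu---so there is no in-paper argument to compare against; your proof is correct and is essentially the standard one from that reference: the easy direction via Proposition~\ref{prop:induced}, and the converse by induction along the modular decomposition (disconnected, co-disconnected, and prime-quotient cases) powered by the substitution lemma $\cw(H[G_1,\dots,G_n])\le\max\{\cw(H),\cw(G_1),\dots,\cw(G_n)\}$, whose parse-tree proof you sketch correctly. Your attention to the edge cases---that $K_1$ and $K_2$ count as prime under the paper's definition, which is what makes the maximum well defined and gives $m\ge 2$ in the join case---is exactly the point on which the statement depends.
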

\end{sloppypar}

{The following proposition shows that for hereditary graph classes, the power-boundedness of the clique-width depends only on the prime graphs in the class.

\begin{proposition}\label{lem:power-bdd-prime}
Let ${\cal G}$ be a hereditary graph class and let ${\cal G}'$ be
the set of all prime graphs in ${\cal G}$. Then, for every
{$k\ge 1$}, the graph class ${\cal G}^k$ is of bounded
clique-width if and only if $({\cal G}')^k$~is of bounded
clique-width.
In particular, ${\cal G}$ is of power-bounded clique-width if and only if ${\cal G}'$ is of
power-bounded clique-width.
\end{proposition}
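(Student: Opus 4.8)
The plan is to prove the (a priori stronger) statement that $\cw({\cal G}^k)=\cw(({\cal G}')^k)$ for every $k\ge1$; the claimed equivalence, and hence its ``in particular'' consequence, then follows immediately. One inequality is free: from ${\cal G}'\subseteq{\cal G}$ we get $({\cal G}')^k\subseteq{\cal G}^k$, hence $\cw(({\cal G}')^k)\le\cw({\cal G}^k)$. For the reverse inequality, put $c=\cw(({\cal G}')^k)$; by Proposition~\ref{prop:cwd-prime2} it suffices to prove $\cw(P)\le c$ for every prime induced subgraph $P$ of every $G^k$ with $G\in{\cal G}$. Write $P=G^k[S]$. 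When $|S|\le2$ this is routine (then $P\in\{K_1,K_2,\overline{K_2}\}$, and $\cw(P)\le c$ unless ${\cal G}$ is trivial), so assume $|S|\ge3$. Being prime on at least three vertices, $P$ is connected, so $S$ lies in a single connected component $G_0$ of $G$; note $G_0$ is an induced subgraph of $G$, hence $G_0\in{\cal G}$. If $k=1$ we are done: $P=G_0[S]$ is then a prime induced subgraph of $G$, so $P\in{\cal G}'$ and $\cw(P)\le c$. Assume $k\ge2$ henceforth.

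The first structural fact I would establish is that \emph{modules survive powers}: if $M$ is a module of a graph $F$, then $M$ is a module of $F^k$. Indeed, a vertex $v\notin M$ with a neighbour in $M$ is, since $M$ is a module, adjacent in $F$ to all of $M$, hence also in $F^k$; and a vertex $v\notin M$ with no neighbour in $M$ is equidistant in $F$ from all of $M$ --- taking a shortest $v$--$m$ path and letting $x$ be the last vertex before the path first enters $M$, the vertex $x$ lies outside $M$ but has a neighbour in $M$, hence is adjacent to all of $M$, so the tail of the path may be rerouted to any other vertex of $M$ without changing the length --- so again $v$ is adjacent in $F^k$ to all of $M$ or to none. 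The second fact is that the quotient records distances between distinct modules exactly: if $\Pi$ partitions $V(F)$ into modules with quotient $Q$ and $M\ne M'$ are classes of $\Pi$, then $\dist_F(m,m')=\dist_Q(M,M')$ for $m\in M$, $m'\in M'$, because a shortest $M$--$M'$ path in $Q$ lifts to an $F$-path of equal length (consecutive classes along it are completely joined in $F$), while a shortest $m$--$m'$ path in $F$ projects to a walk in $Q$ of no greater length.

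Now the finish, for $k\ge2$. Since $P=G_0^k[S]$ is prime on $\ge3$ vertices, $G_0$ must have diameter at least $3$: otherwise all distances in $G_0$ would be at most $2\le k$, so $G_0^k$ --- and hence its induced subgraph $P$ --- would be complete, contradicting primality. In particular $G_0$ is co-connected, so it has a well-defined partition $\Pi=\{M_1,\dots,M_t\}$ into maximal modules with prime quotient $Q$, and $Q\in{\cal G}'$ since $Q$ is isomorphic to an induced subgraph of $G_0$. If $G_0$ is prime, then $G_0\in{\cal G}'$ and $\cw(P)\le\cw(G_0^k)\le c$ by Proposition~\ref{prop:induced}. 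Otherwise each $M_i$ is a proper module of $G_0$ and, $G_0$ being connected, has a neighbour outside $M_i$; thus any two vertices of $M_i$ share a neighbour and $G_0^k[M_i]$ is complete. By the first fact each $M_i$ is a module of $G_0^k$, so each $M_i\cap S$ is a module of the prime graph $P$ and is therefore trivial in $P$: either $|M_i\cap S|\le1$ or $S\subseteq M_i$. The possibility $S\subseteq M_i$ is excluded, as it would make $P$ complete on $\ge3$ vertices. Hence $|M_i\cap S|\le1$ for all $i$, i.e.\ $S$ meets each module at most once, and by the second fact $P=G_0^k[S]$ is isomorphic to an induced subgraph of $Q^k$, so $\cw(P)\le\cw(Q^k)\le c$. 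This yields $\cw({\cal G}^k)\le c$, completing the proof.

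The point requiring most care is the discrepancy between $G^k[S]$ and $(G[S])^k$: these differ in general, so a prime induced subgraph of a power is not itself the power of anything, and one cannot reason directly. The two facts above --- that modules are preserved by powering, and that a prime induced subgraph meets each maximal module at most once --- are exactly what allow replacing the possibly imprimitive $G_0$ by its prime quotient $Q\in{\cal G}'$; the only really hands-on step is the distance-constancy inside a module, which is elementary but depends on the shortest-path bookkeeping indicated above.
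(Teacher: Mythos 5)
Your proof is correct and follows essentially the same route as the paper's: both reduce to the prime quotient graph $Q$ of the partition into maximal modules, note that $Q\in{\cal G}'$, and combine Proposition~\ref{prop:cwd-prime2} with the fact that prime induced subgraphs of $G^k$ embed into $Q^k$. The only difference is organizational --- the paper argues by induction on the number of vertices and simply asserts that $G^k$ is obtained from $Q^k$ by substituting cliques for the modules, whereas you avoid the induction and supply explicit proofs of the two facts (modules persist under powering, and the quotient preserves inter-module distances) that justify that assertion.
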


\begin{proof}
Since ${\cal G}'\subseteq {\cal G}$, we have $({\cal G}')^k\subseteq {\cal G}^k$.
Therefore, if ${\cal G}^k$ is of bounded clique-width, then so is $({\cal
G}')^k$. Suppose now that $\cw(({\cal G}')^k)\le \ell$ for some $k\ge 1$ and $\ell \ge 2$.
By induction on the number of vertices, we will prove that for every $G\in {\cal G}$, we have $\cw(G^k)\le\ell$.
If $G$ is disconnected, then so is $G^k$, and we can assume inductively that $\cw(H)\le \ell$ holds for
every connected component $H$ of $G^k$, which implies the desired
inequality for $G^k$. If the complement of $G$ is disconnected,
then $\diam(G)\le 2$, hence $G^k$ is complete and the result
follows. Now, let $G$ be a connected co-connected graph in ${\cal G}$,
and let $Q$ be the quotient graph of $G$ with respect to the
partition of $V(G)$ into maximal modules. Then, the graph $G^k$ is
isomorphic to the graph obtained from the graph $Q^k$ by
substituting a clique of size $|M|$ for each vertex $M$ of $Q$. In
particular, every prime induced subgraph of $G^k$ is isomorphic to
an induced subgraph of $Q^k$. Applying
Proposition~\ref{prop:cwd-prime2} twice, we obtain
$\cw(G^k) \le \max\{\cw(H)\mid H$ is a prime induced subgraph
of $Q^k\}= \cw(Q^k)\le \ell\,,$ as claimed.
\end{proof}}

\section{The definition, basic properties, and examples}\label{sec:basic}

\begin{sloppypar}
{In this section, we obtain some initial insight into power-(un)boundedness of the clique-width.
Several of the results developed in this section will be used in later sections.

The central notion of the paper is introduced in the following.}
\end{sloppypar}

\begin{definition}\label{def:main}
A graph class ${\cal G}$ is said to be of {\em power-bounded
clique-width} if there exists a positive integer $k$ such that
${\cal G}^k$ is of bounded clique-width. {If no such $k$ exists,
we say that} ${\cal G}$ is of {\em power-unbounded clique-width}.
\end{definition}

{In other words}, a graph class ${\cal G}$ is of power-bounded
clique-width if there exists a pair of positive integers $k$ and
$\ell$ such that for every $G\in {\cal G}$, we have
$\cw(G^k)\le\ell$.
For a {graph} class ${\cal G}$ of power-bounded clique-width, we
denote by $\pi({\cal G})$ the smallest positive integer $k$ such
that ${\cal G}^k$ is of bounded clique-width. Clearly, $\pi({\cal
G}) = 1$ if and only if ${\cal G}$ is of bounded clique-width.
If ${\cal G}$ is of power-unbounded clique-width, then $\pi({\cal G})$ is
defined to be $\infty$.
In some arguments in the paper, we will use the obvious
fact that if ${\cal G}\subseteq {\cal H}$, then $\pi({\cal G})\le
\pi({\cal H})$. Consequently, if ${\cal G}\subseteq {\cal H}$ and
${\cal H}$ is of power-bounded clique-width, then so is ${\cal
G}$.

In the next proposition, we collect some basic properties of the family of graph classes of power-bounded clique-width.
In particular, the family is closed under taking powers and contains all graph classes of bounded diameter.

{
\begin{proposition}\label{prop:pw-bdd-cwd-powers}
Let ${\cal G}$ be a graph class.
Then:
\begin{enumerate}
  \item If $\cw({\cal G}) \le \ell$ for some $\ell\ge 1$, then for every $k\ge 1$ we have $\cw({\cal G}^k) \le 4(k+1)^{\ell}$.
  \item For every $k\ge 1$, we have $\pi({\cal G}^k)\le \pi({\cal G})\le \diam({\cal G})\,.$
\end{enumerate}
\end{proposition}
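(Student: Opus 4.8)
The plan is to prove the two statements in turn, with the first being the substantial one and the second following from it by elementary arithmetic of graph powers.

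For statement~1, fix a graph $G$ with clique-width at most $\ell$, together with a clique-width expression $\tau$ for $G$ using labels $\{1,\dots,\ell\}$, and fix $k\ge 1$. I would process $\tau$ bottom-up and build a clique-width expression for $G^k$, maintaining at each subexpression of $\tau$ — which evaluates to some labeled graph $H$ on the vertices introduced so far — not $H$ itself but a relabeled copy of $H^k$ in which the new label of a vertex $v$ records its \emph{distance profile}, the vector $\big(\min(k,\dist_H(v,V_i))\big)_{i=1}^{\ell}$, where $V_i$ is the set of label-$i$ vertices of $H$ and $\dist_H(v,\emptyset)$ is read as $k$. Exactly one coordinate of this vector equals $0$, namely the one indexing the old label of $v$, and the remaining coordinates lie in $\{1,\dots,k\}$; hence there are at most $\ell\cdot k^{\ell-1}\le(k+1)^{\ell}$ distinct profiles. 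The reason this is the right invariant is that the profile of $v$ encodes everything about $v$ needed to continue: whether a later join of old labels $i$ and $j$ creates an edge of the $k$-th power incident to $v$, and how $v$'s profile changes after such a join, both depend only on $v$'s current profile together with finitely many ``global'' quantities $\dist_H(V_i,V_j)$, which are known to us since $H$ is determined by the subexpression.

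The four operations are handled as follows. A leaf and a disjoint union are immediate: a disjoint union adds no edges and hence changes no distances, so profiles are unaffected. An old relabeling $i\mapsto j$ leaves $H^k$ unchanged as a graph but transforms each profile by a fixed rule (merge coordinates $i$ and $j$, reset coordinate $i$ to $k$), realized by a bounded sequence of new relabelings. The delicate case is an old join $\eta_{i,j}$, turning $H$ into $H'$: since this only \emph{adds} edges, the $k$-th power only \emph{gains} edges, and a short case analysis of those shortest paths in $H'$ that use the newly added complete bipartite graph between $V_i$ and $V_j$ shows (a) whether $uv$ becomes an edge of $(H')^k$ depends only on the ordered pair of profiles of $u,v$, and (b) the new profile of $v$ is a fixed function of its old profile and the global quantities above. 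Thus the update is realized by a bounded number of joins and relabelings on profile-labels. I expect the main technical nuisances here to be edges that must be added \emph{within} a single profile class (operation~(iii) needs two distinct labels) and performing the profile-update relabelings without name clashes; both are handled by temporarily splitting each class into a constant number of copies, which is the source of the constant $4$ in the bound $4(k+1)^{\ell}$. At the root of $\tau$ we have $H=G$, so the construction outputs $G^k$, having used at most $4(k+1)^{\ell}$ labels.

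Statement~2 then follows formally. For $\pi(\mathcal{G})\le\diam(\mathcal{G})$: if $d=\diam(\mathcal{G})<\infty$ then for every $G\in\mathcal{G}$ the graph $G^{d}$ is a disjoint union of cliques, so $\cw(\mathcal{G}^{d})\le 2$ and hence $\pi(\mathcal{G})\le d$. For $\pi(\mathcal{G}^{k})\le\pi(\mathcal{G})$: using $\dist_{G^{a}}(u,v)=\lceil\dist_{G}(u,v)/a\rceil$ within each connected component one checks $(G^{k})^{p}=G^{kp}=(G^{p})^{k}$ for all $p\ge 1$, so with $p=\pi(\mathcal{G})$ (which we may assume finite) we get $(\mathcal{G}^{k})^{p}=(\mathcal{G}^{p})^{k}$, a class of bounded clique-width by statement~1 applied to the bounded-clique-width class $\mathcal{G}^{p}$; therefore $\pi(\mathcal{G}^{k})\le p=\pi(\mathcal{G})$.
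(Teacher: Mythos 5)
Your part~2 is the paper's argument verbatim (the identity $(G^k)^p=G^{kp}=(G^p)^k$ plus part~1, and the disjoint-union-of-cliques observation for the diameter bound), but your part~1 takes a genuinely different route: the paper does not construct anything, it simply cites the theorem of Suchan and Todinca that $\nlcw(G)\le\ell$ implies $\nlcw(G^k)\le 2(k+1)^{\ell}$ together with the inequalities $\nlcw(G)\le\cw(G)\le 2\,\nlcw(G)$, which is exactly where the constant $4$ and the exponent $\ell$ come from; you instead re-derive the underlying result directly on a clique-width expression via truncated distance profiles. Your sketch is essentially the standard proof of the cited theorem, and its one delicate point is handled correctly: the literal distance $\dist_H(u,v)$ between two vertices of a subexpression graph $H$ is \emph{not} determined by their profiles, so ``$uv$ is an edge of $(H')^k$'' is not a function of the profile pair --- only the \emph{newly created} edges of $(H')^k\setminus H^k$ are (a shortest path realizing a new $\le k$ distance must cross the join, and its length is then $\dist_H(u,V_s)+\delta+\dist_H(v,V_t)$ with $\delta\in\{1,2\}$ depending only on $s,t$); your construction survives precisely because it maintains $H^k$ inductively and only ever needs to add this profile-determined increment. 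What your approach buys is a self-contained proof and an explicit invariant; what it costs is that the bookkeeping (intra-class joins via temporary label splitting, non-injective profile relabelings) must be carried out carefully to certify the specific constant $4(k+1)^{\ell}$, which the citation route gives for free. Either way the proposition stands.
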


\begin{proof}
Suppose that $\cw({\cal G}) \le \ell$ for some positive integer $\ell$, and let $k$ be a positive integer.
Denoting by $\nlcw(G)$ the NLC-width of a graph $G$, every graph $G$ with
$\nlcw(G)\le \ell$ satisfies $\nlcw(G^k)\le 2(k+1)^\ell$~\cite{MR2354335}, and
for every graph $G$, we have $\nlcw(G)\le \cw(G)\le 2\cdot\nlcw(G)$~\cite{MR1676494}.
Therefore, if $\cw(G) \le \ell$, then $\nlcw(G)\le \ell$, and
consequently $\cw(G^k) \le 2\cdot\nlcw(G^k) \le 4(k+1)^{\ell}\,.$
This implies that for every $G\in {\cal G}$, we have $\cw(G^k) \le 4(k+1)^{\ell}$, proving
the first part of the proposition.

For the second part of the proposition, we first show the inequality
$\pi({\cal G}^k)\le \pi({\cal G})$. Suppose that $p = \pi({\cal G})$ is finite (otherwise, there is nothing to show).
Then, $\ell = \cw({\cal G}^p)$ is finite.
{We have already proved that} for every graph $G\in {\cal G}^p$, we
have $\cw(G^k)\le  4(k+1)^{\ell}$. Consequently, for every graph
$G\in {\cal G}$, we have $\cw((G^p)^k)\le 4(k+1)^{\ell}$.
Observe that the graph $(G^p)^k$ is equal to the
graph $G^{pk}$, and, by symmetry, to the graph $(G^k)^p$. This
implies that for every graph $G\in {\cal G}$, we have
$\cw((G^k)^p)\le 4(k+1)^{\ell}$, and thus $\pi({\cal G}^k)\le p$.

Finally, we show that $\pi({\cal G})\le \diam({\cal G})$.
Suppose that $k = \diam({\cal G})$ is finite (otherwise, there is nothing to show).
Then, for every $G\in {\cal G}$, the graph $G^k$ is a disjoint union of complete graphs, and hence
$\cw(G^k)\le 2$. Consequently, $\cw({\cal G}^{\diam({\cal G})})\le
2$ and the claimed inequality
follows.
\end{proof}}

We continue with the observation that for graphs of bounded degree
and proper minor-closed graph classes (that is, minor-closed graph classes excluding at least one minor),
power-bounded clique-width is equivalent to bounded clique-width and
bounded treewidth. This result will be used in Section~\ref{sec:unbounded}.

\begin{proposition}\label{prop:bdd-deg}
Let ${\cal G}$ be a graph class
that is either of bounded degree or minor closed. Then, the following are equivalent:
\begin{enumerate}
\item $\cal G$ is of power-bounded clique-width.
\item $\cal G$ is of bounded clique-width.
\item $\cal G$ is of bounded treewidth.
\end{enumerate}
\end{proposition}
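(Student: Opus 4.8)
The plan is to prove the cycle of implications $(3)\Rightarrow(2)\Rightarrow(1)\Rightarrow(3)$, noting that the hypothesis on ${\cal G}$ is needed only for the last one. Implication $(3)\Rightarrow(2)$ is the classical fact, due to Courcelle and Olariu~\cite{MR1743732}, that bounded treewidth implies bounded clique-width, and it holds for every graph class; implication $(2)\Rightarrow(1)$ is immediate, since $\cw({\cal G})<\infty$ is the same as $\cw({\cal G}^1)<\infty$ (or one may simply invoke the first part of Proposition~\ref{prop:pw-bdd-cwd-powers}). Hence the entire content of the proposition is the implication $(1)\Rightarrow(3)$, which I would establish first for classes of bounded degree and then reduce the minor-closed case to that one.

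For the bounded-degree case, suppose $\Delta(G)\le D$ for every $G\in{\cal G}$ and that ${\cal G}$ is of power-bounded clique-width, say $\cw({\cal G}^k)\le\ell$. The crucial point is that taking $k$-th powers preserves bounded degree: the closed neighbourhood of a vertex $v$ in $G^k$ is exactly its ball of radius $k$ in $G$, which has at most $1+D+D(D-1)+\dots+D(D-1)^{k-1}\le 1+kD^k$ vertices, so $\Delta(G^k)\le kD^k$, a bound depending only on $D$ and $k$. Thus ${\cal G}^k$ is a class of bounded degree, hence it excludes $K_{kD^k+1,\,kD^k+1}$ as a subgraph, and it has clique-width at most $\ell$. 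By the theorem of Gurski and Wanke to the effect that a class of bounded clique-width excluding some fixed complete bipartite graph as a subgraph has bounded treewidth, we get $\tw({\cal G}^k)<\infty$. Since $G$ is a spanning subgraph of $G^k$ and treewidth is monotone under taking subgraphs, $\tw(G)\le\tw(G^k)$ for every $G\in{\cal G}$, so ${\cal G}$ is of bounded treewidth, as desired.

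For the minor-closed case, I would argue by contradiction: assume ${\cal G}$ is minor-closed and of power-bounded clique-width but $\tw({\cal G})=\infty$. By the Excluded Grid Theorem of Robertson and Seymour, graphs in ${\cal G}$ then contain arbitrarily large grids as minors; since ${\cal G}$ is closed under minors, this forces ${\cal G}$ to contain the $n\times n$ grid $\Gamma_n$ for every $n\ge 1$. The subclass ${\cal H}=\{\Gamma_n:n\ge 1\}$ satisfies $\pi({\cal H})\le\pi({\cal G})<\infty$ by monotonicity of $\pi$ under inclusion, so ${\cal H}$ is of power-bounded clique-width; but every grid has maximum degree at most $4$, so ${\cal H}$ is a class of bounded degree, and the previous paragraph yields $\tw({\cal H})<\infty$ --- contradicting the fact that $\tw(\Gamma_n)\to\infty$ as $n\to\infty$. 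Hence $\tw({\cal G})<\infty$, which completes the implication $(1)\Rightarrow(3)$ and the proof.

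The main (and essentially the only non-routine) ingredient is the Gurski--Wanke theorem, which does the actual work of converting a clique-width bound into a treewidth bound once large bicliques have been excluded; the remaining steps --- the bounded degree of powers, subgraph-monotonicity of treewidth, the Grid Theorem, and monotonicity of $\pi$ --- are all standard. The single subtle point is the step in the minor-closed case that promotes grids from \emph{minors} of members of ${\cal G}$ to \emph{members} of ${\cal G}$: this is exactly where closure under minors is used, and without it one would know only that ${\cal G}$ has grid minors, which says nothing about its clique-width. An alternative treatment of the minor-closed case would invoke directly that grids have unbounded clique-width (Golumbic and Rotics), yielding $(2)\Leftrightarrow(3)$ for minor-closed classes; the reduction above has the advantage of reusing the bounded-degree case verbatim and not requiring this fact.
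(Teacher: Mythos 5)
Your proof is correct and follows essentially the same route as the paper: bound the maximum degree of $G^k$, convert the clique-width bound on ${\cal G}^k$ into a treewidth bound using that degree bound, pull it back to $G$ by subgraph-monotonicity of treewidth, and reduce the minor-closed case to the bounded-degree case via grids and the Robertson--Seymour grid/planar-minor theorem. The only difference is cosmetic: where you invoke the Gurski--Wanke bound $\tw(G)\le 3(t-1)\cw(G)-1$ for $K_{t,t}$-subgraph-free graphs, the paper uses the Courcelle--Olariu bound $\tw(G)\le f(\Delta(G),\cw(G))$; both are standard and the former is in fact cited elsewhere in the paper.
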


\begin{proof}
{If ${\cal G}$ is of bounded treewidth, then it is} of
bounded clique-width~\cite{MR2148860}, hence also of power-bounded clique-width.
{Therefore, we only need to show that power-bounded clique-width implies bounded treewidth.}

\begin{sloppypar}
{Assume that $\Delta(G)\le d$ for all $G\in {\cal G}$, and that
there are $k,\ell\ge 1$ such that such that $\cw({\cal G}^k)\le \ell$.}
{Observe: $\Delta(G^k)\le d\cdot \sum_{i =
0}^{k-1}(d-1)^i\le d^{k+1}$}. Courcelle and Olariu showed
in~\cite{MR1743732} that there exists a function $f$
such that for every graph $G$, we have
$\tw(G)\le f(\Delta(G),\cw(G))$.
This implies the existence of a function $g$
that is non-decreasing in each component such that
$\tw(G)\le g(\Delta(G),\cw(G))$ holds for all graphs.
Therefore, since adding edges cannot decrease the treewidth, we have for every $G \in {\cal G}$:
$$\tw(G) \le \tw(G^k) \le  g(\Delta(G^k), \cw(G^k))\le  g(d^{k+1},\ell)\,.$$
Thus, ${\cal G}$ is of bounded treewidth.
\end{sloppypar}

{Finally, let ${\cal G}$ be a minor-closed graph class of power-bounded clique-width.
Since the class of $n\times n$ grids (see Section~\ref{sec:unbounded} for the definition) is of unbounded treewidth (see, e.g.,~\cite[Corollary
89]{MR1647486}), the above implies that the class of grids
is also of power-unbounded clique-width. Therefore, ${\cal G}$
excludes some grid $G$.} Since ${\cal G}$ is {minor closed}, no graph
in ${\cal G}$ has a minor isomorphic to $G$. {Since every graph class excluding a
fixed planar graph as a minor is of bounded treewidth~\cite{MR855559}, the conclusion follows.}
\end{proof}

\subsection{Examples of graph classes of power-unbounded clique-width}\label{sec:unbounded}

{For an integer $n\ge 1$, the {\it $n\times n$ grid} is the graph with vertex set
$\{1,\ldots, n\}^2$, in which two vertices $(i,j)$ and $(k,\ell)$
are adjacent if and only if $|i-k|+|j-\ell| = 1$.}

\begin{example}\label{ex:grids}
For every $k\ge 1$, the set of graphs obtained from grids by replacing each edge with a path with $k$ edges
 is of power-unbounded clique-width.
\end{example}

Indeed, let $G_{n,k}$ be the graph obtained from the $n\times n$ grid by
replacing each edge with a path with $k$ edges. Since the $n\times
n$ grid $G_{n,1}$ is a minor of $G_{n,k}$, and $n\times n$ grids are of
unbounded treewidth, the set of graphs $\{G_{n,k}\mid
n\ge 1\}$ is also of unbounded treewidth. (It is well known that
if $H$ is a minor of $G$, then $\tw(H)\le\tw(G)$, see, e.g.,~\cite[Lemma 16]{MR1647486}.)
The conclusion now follows from
Proposition~\ref{prop:bdd-deg}.

{The {\it girth}} of a graph $G$ is defined as the shortest length of a cycle in $G$
(or infinity if $G$ is acyclic). The next example immediately follows as a consequence of Example~\ref{ex:grids}.

\begin{example}\label{cor:girth}
For every $k\ge 3$, the class of graphs of girth at least $k$ is of power-unbounded clique-width.
\end{example}

In the next proposition, we show that bipartite permutation graphs, path powers, unit interval graphs, and hypercube graphs are of
power-unbounded clique-width. {A} graph $G$ is a {\it bipartite permutation graph} if it is both bipartite and permutation,
where a graph $G = (V,E)$ is {\it bipartite} if its vertex set can be partitioned into two independent sets, and
{\it permutation} if there exists a permutation $\pi = (\pi_1,\ldots, \pi_n)$ of the set $\{1,\ldots, n\}$
where $V = \{v_1,\ldots, v_n\}$ such that $v_iv_j\in E$ if and only if $(\pi_i-\pi_j)(i-j)<0$.
The class of path powers is defined as ${\cal P}^+ = \{{(P_n)^k}\mid n\ge 1, k\ge 1\}$.
A graph $G$ is a {\it unit interval graph} if it is the intersection graph of a collection of unit
intervals on the real line. Bipartite permutation graphs and unit interval graphs were shown by Lozin~\cite{MR2854788} to be minimal graph classes of unbounded clique-width (in the sense that every proper hereditary subclass of either unit interval or bipartite permutation graphs is of bounded clique-width). For an integer $d\ge 1$, the {\it $d$-dimensional hypercube graph} is the graph $Q_d$
with vertex set given by all $2^d$ binary sequences of length $d$, in which two vertices
are adjacent if and only {their sequences differ in exactly one coordinate}.

\begin{proposition}\label{prop:path-powers}
Each of the following graph classes is of power-unbounded clique-width:
bipartite permutation graphs, path powers, unit interval graphs, hypercube graphs.
\end{proposition}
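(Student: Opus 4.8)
The plan is to prove each of the four classes is of power-unbounded clique-width by exhibiting, inside the $k$-th power of a suitable member, an induced structure of unbounded clique-width --- and the most economical way to do this is to reduce to Example~\ref{ex:grids} or to the subdivided-grid graphs $G_{n,k}$, using Proposition~\ref{prop:induced} (clique-width is monotone under induced subgraphs), Proposition~\ref{prop:complementation} (clique-width remains unbounded under a bounded number of local complementations of induced subgraphs), and the fact that $\pi(\mathcal G^k)\le\pi(\mathcal G)$ and $\mathcal G\subseteq\mathcal H\Rightarrow\pi(\mathcal G)\le\pi(\mathcal H)$ from Proposition~\ref{prop:pw-bdd-cwd-powers}. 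The key observation is that each of these classes, for appropriate parameters, ``contains'' a long grid-like subdivided structure whose high powers still carry unbounded clique-width, because taking a power only adds local edges and does not collapse the large-scale grid geometry.

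\textbf{Path powers.} The class $\mathcal P^+$ contains every $(P_n)^k$, and for fixed $k$ the set $\{(P_n)^k : n\ge 1\}$ is exactly the $k$-th power of the class of paths $\{P_n : n\ge1\}$, which is a subclass of the graphs $G_{m,1}$-subdivisions for $k=1$; more to the point, I would argue directly that $\{(P_n)^k : n\ge1\}$ has unbounded clique-width for every fixed $k$ (these are ``$k$-power-of-path'' graphs, a.k.a.\ proper interval graphs of bounded bandwidth, and are well known to have unbounded clique-width --- alternatively one invokes Lozin's minimality of unit interval graphs). Since $\mathcal P^+$ is hereditary-ish in the relevant sense, for any prospective $(k,\ell)$ the graph $(P_n)^{2k}=((P_n)^k)^2$ --- wait, more cleanly: if $\mathcal P^+$ were of power-bounded clique-width, say $\cw((\mathcal P^+)^{k})\le\ell$, then in particular $\cw(((P_{n})^{k'})^{k})\le\ell$ for all $n$ and all $k'$, hence $\cw((P_n)^{k k'})\le\ell$; but $(P_n)^{kk'}$ for $n$ large is a long path power, and as $n\to\infty$ with $kk'$ fixed these have unbounded clique-width, a contradiction. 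So it suffices to record the single fact that $\{(P_n)^m : n\ge1\}$ has unbounded clique-width for each fixed $m$; this is the technical heart and I would cite it (e.g.\ via unit interval graphs being a minimal class of unbounded clique-width, Lozin~\cite{MR2854788}, since $(P_n)^m$ is a unit interval graph) rather than reprove it.

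\textbf{Unit interval graphs and bipartite permutation graphs.} Both are minimal classes of unbounded clique-width by Lozin~\cite{MR2854788}. For unit interval graphs, note that for any fixed $k$, the $k$-th power of a unit interval graph is again a unit interval graph (powers of interval graphs are interval; powers of proper interval graphs are proper interval), so $(\mathcal{UI})^k\subseteq\mathcal{UI}$, hence $\cw((\mathcal{UI})^k)=\cw(\mathcal{UI})=\infty$ for every $k$; unit interval graphs are therefore of power-unbounded clique-width. For bipartite permutation graphs the class is not closed under powers, but here I would instead exhibit inside bipartite permutation graphs the subdivided grids: the graph obtained from the $n\times n$ grid by subdividing each edge once (or an appropriate number of times) --- more precisely, a ``staircase'' bipartite permutation graph that contains long induced subdivided paths arranged grid-fashion. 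The cleanest route: bipartite permutation graphs include the incidence-type graphs whose $k$-th powers contain $G_{n,c}$ (for some bounded $c$) as an induced subgraph after a bounded number of local complementations; then Propositions~\ref{prop:induced} and~\ref{prop:complementation} and Example~\ref{ex:grids} finish it. The honest statement is that one picks a concrete family of bipartite permutation graphs (the $n\times n$ ``grid-like'' bipartite permutation graphs, which are themselves induced subgraphs of large subdivided grids), and shows their $k$-th powers still have unbounded clique-width because the power only thickens local neighbourhoods.

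\textbf{Hypercube graphs.} The hypercube $Q_d$ has bounded diameter ($\diam(Q_d)=d$ grows, so not bounded) --- it does not have bounded diameter, which is why the easy Proposition~\ref{prop:pw-bdd-cwd-powers}(2) does not apply. Instead I would use that $Q_d$ contains an induced $(2^{d/2})\times\cdots$ grid... actually $Q_d$ contains an induced subgraph isomorphic to a large grid only in a weak sense; the standard fact is that $Q_d$ has treewidth $\Theta(2^d/\sqrt d)$ and in particular unbounded treewidth. Since $Q_d$ is $d$-regular, hypercubes do \emph{not} form a class of bounded degree, so Proposition~\ref{prop:bdd-deg} does not directly apply either. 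The right approach: for a fixed $k$, $(Q_d)^k$ contains an induced subgraph isomorphic to $(Q_{d-k\log?})$-ish... cleaner is to use that $Q_{kd}$ has $Q_d$ as a ``retract'' and that $(Q_d)^k$ contains $Q_{\lfloor d/k\rfloor}$-grids; but the safest line is: $(Q_d)^k$ contains, as an induced subgraph, the graph $G_{n,k'}$ (a subdivided grid) for $n=n(d)\to\infty$ and some bounded $k'$ depending only on $k$ --- because a hypercube contains long induced paths and product structure, so its $k$-th power contains a subdivided grid. Then Example~\ref{ex:grids} with Proposition~\ref{prop:induced} gives the conclusion. I expect the hypercube case to be the main obstacle, since it requires the most explicit embedding argument; the path-power and unit-interval cases reduce immediately to known minimality results, and the bipartite-permutation case reduces to a grid subdivision already handled by Example~\ref{ex:grids}.
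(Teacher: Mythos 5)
There are genuine gaps in three of the four cases, and a real (though repairable) error in the fourth. For path powers, the fact you call the ``technical heart'' --- that $\{(P_n)^m : n\ge 1\}$ has unbounded clique-width for each \emph{fixed} $m$ --- is false: the result the paper uses (Heggernes--Meister--Rotics) says $\cw(P_n^s)=s+2$ for $n\ge (s+1)^2$, so for fixed exponent the clique-width is \emph{bounded} by $s+2$. Your reduction $\cw(((P_n)^{k'})^k)=\cw((P_n)^{kk'})\le \ell$ is the right setup, but the contradiction must come from letting $k'$ grow (choose $k'$ with $kk'+2>\ell$, then $n$ large), not from letting $n\to\infty$ with $kk'$ fixed. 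For unit interval graphs your argument is a non sequitur: from $(\mathcal{UI})^k\subseteq\mathcal{UI}$ and $\cw(\mathcal{UI})=\infty$ you conclude $\cw((\mathcal{UI})^k)=\infty$, but an inclusion into a class of unbounded clique-width only yields an upper bound on the subclass, never a lower bound (and the inclusion is strict: e.g.\ $P_3$ is not the square of any graph). The correct direction, which the paper takes, is that path powers \emph{are} unit interval graphs, so power-unboundedness of the subclass of path powers transfers upward to the superclass.

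For bipartite permutation graphs and hypercubes you do not actually have proofs. The bipartite-permutation sketch rests on embedding subdivided grids into (powers of) bipartite permutation graphs, but large grids contain asteroidal triples and permutation graphs are AT-free, so that route is blocked; the paper instead constructs an explicit family $B_n$ of bipartite permutation graphs and shows that $B_{k(n-1)+2}^k$ contains, as an induced subgraph, the Golumbic--Rotics graph $G_n$ with $\cw(G_n)\ge n$, then applies Proposition~\ref{prop:induced}. For hypercubes your text is explicitly speculative (``contains a subdivided grid \dots because a hypercube contains long induced paths and product structure''); the paper's argument is quite different and self-contained: the coordinate-replication map $x\mapsto [x]^k$ embeds $Q_d$ as an induced subgraph of $(Q_{dk})^k$, and hypercubes themselves have unbounded clique-width because they have unbounded treewidth, contain no $K_{3,3}$ subgraph, and $\tw(G)\le 3(t-1)\cw(G)-1$ for $K_{t,t}$-subgraph-free $G$. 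You would need to supply concrete constructions of this kind for both cases before the proposition is established.
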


\begin{proof}
First, we consider bipartite permutation graphs. Let $B_n$ be the graph {whose} set of vertices is defined by $V(B_n)=\{(i,j)\mid \,0\leq i,j\leq n-1\}$ and {whose} set of edges is defined by $E(B_n)=\{(i_1,j),(i_2,j+1)\mid\,0\leq j\leq n-2, 1\leq i_1\leq n-1, 0\leq i_2\leq i_1-{1}\}$ (see Fig.~\ref{fig:GnLn}). It follows from~\cite[Theorem~1]{MR1973244} that $B_n$ is a bipartite permutation graph.

\begin{figure}[!ht]
  \begin{center}
\includegraphics[width=\linewidth]{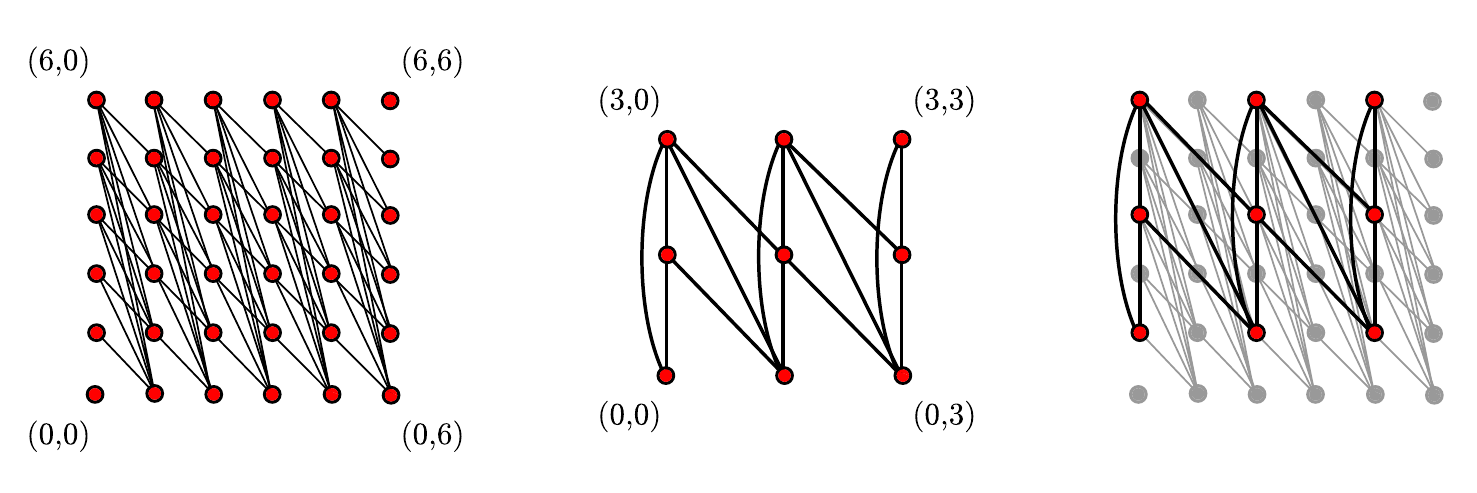}
  \end{center}
\caption{The first two graphs, from left to right, are $B_{6}$ and
$G_3$. The rightmost figure shows that $G_3$ is an induced
subgraph of $B_6^2$ (for the sake of clarity, we do not draw all the edges of
$B_6^2$).} \label{fig:GnLn}
\end{figure}

We denote by $G_n$ the graph obtained from $B_n$ by adding an edge between every two vertices that agree in the second coordinate (see
Fig.~\ref{fig:GnLn} for $G_3$).
Fix an integer $k\ge 2$. The subgraph of $B_{k(n-1)+2}^k$ induced
by $\{(ik+1,jk)\mid \,0\leq i,j\leq n-1\}$ is isomorphic to $G_n$;
in fact, $(ik+1,jk)\mapsto (i,j)$ is an isomorphism between the
two graphs. Since it was proved in \cite{MR1792124} that
$\cw(G_n)\geq n$, {Proposition~\ref{prop:induced} implies that}
$\cw(B_{k(n-1)+2}^k)\geq n$ for every $n\geq 1$ and $k\geq 2$.
This proves that the class of bipartite permutation graphs is of
power-unbounded clique-width.

Now, consider the class of path powers. It was proved in \cite{MR2972287} that for each positive integer $s$ and each
$n\geq (s+1)^2$, we have $\cw(P_n^s)=s+2$.
Therefore, for each pair of {positive} integers $k$ and $N$, there exist two {positive} integers $j$ and $n$ such that the
$k$-th power of the graph $P_n^j\in {\cal P}^+$ has clique-width more than $N$.
Indeed, we can take $j= N$ and $n= (Nk+1)^2$, obtaining
$\cw((P_n^j)^{k})=\cw(P_{(Nk+1)^2}^{{Nk}})=Nk+2>N$.

{Path powers are unit interval graphs \cite[Theorem~9(ii)]{MR2774114}.}

\begin{sloppypar}
Finally, consider the hypercube graphs.
{We will show that for every two positive integers $k$ and $d$, there exists an integer $d'$ such that the $k$-th power of the $d'$-dimensional hypercube $Q_{d'}^k$ contains the $d$-dimensional hypercube $Q_d$ as an induced subgraph.
This will imply the desired conclusion, since hypercube graphs are of unbounded clique-width, which follows from the known facts
that the hypercube graphs are of unbouned treewidth~\cite{MR2204113} and do not contain $K_{3,3}$ as a subgraph, and
the inequality $\tw(G)\le 3(t-1)\cw(G)-1$, which holds for every graph $G$~that does not contain $K_{t,t}$ as a subgraph~\cite{MR1850348}.}

The integer $d'$ can be defined as $d' = dk$. For $x \in V(Q_d) = \{0, 1\}^d$, define $[x]^k$ as the {\it extension} of $x$,
replacing each coordinate of $x$ by $k$ consecutive coordinates of the same value.
We will argue that for all $x,y\in V(Q_d)$, we have
$$xy \in  E(Q_d) \Longleftrightarrow [x]^k[y]^k \in E((Q_{dk})^k)\,,$$
which proves the claimed result.
Suppose first that $xy\in E(Q_d)$. This means that $x$ and $y$ differ in exactly one~coordinate. Hence, $[x]^k$ and $[y]^k$ differ in exactly $k$ coordinates, and therefore $[x]^k[y]^k \in E((Q_{dk})^k)$.
Conversely, suppose that $x$ and $y$ are vertices of $Q_d$ such that
 $[x]^k[y]^k \in E((Q_{dk})^k)$.
 Then, $[x]^k$ and $[y]^k$ differ in at most $k$ coordinates. On the other hand, since $[x]^k$ and $[y]^k$ are distinct vertices of $Q_{dk}$, they differ in at least $k$ coordinates. Hence, they differ in exactly $k$ coordinates, which implies that $x$ and $y$ differ in exactly one coordinate and therefore $xy\in E(Q_d)$.\qedhere
\end{sloppypar}
\end{proof}

\section{Graph classes with arbitrary finite value of $\pi({\cal G})$}\label{sec:split}

In this section, we construct graph classes of power-bounded clique-width that require taking arbitrarily large powers in order to produce a graph class of bounded clique-width. More specifically, we will show that
the value of $\pi({\cal G})$ can be an arbitrary positive integer.
Recall that for a class ${\cal G}$ of power-bounded clique-width,
we denote by $\pi({\cal G})$ the smallest positive integer $k$
such that ${\cal G}^k$ is of bounded clique-width.

Our constructions are based on the class of split graphs. {A {\em split graph} is a graph $S$ that has a {\em split partition},
that is, a pair $(K,I)$ such that $K$ is a clique,
$I$ is an independent set, $K\cup I = V(S)$, and
$K\cap I = \emptyset$. In what follows, we will only consider split partitions $(K,I)$
such that $K$ is a maximal clique in $S$.

Let $S$ be a split graph with a split partition $(K,I)$, and let $k\ge 3$.
We define $S_k$ to be the graph constructed as follows.
Let $K = \{w_1, \dots, w_r\}$ and
$I = \{v_1, \dots, v_s\}$.
\begin{itemize}
  \item {\bf Case 1:} {\it $k\ge 4$ is even.}

In this case, $S_k$ is the graph obtained from $S$ by making $I$ a clique, adding, for each vertex $v_i \in I$, a path $P_i$ of length
$k/2$ having (new) vertices $v_i = v_i^0,v_i^1, \dots, v_i^{k/2}$, and adding, for each vertex $w_j \in K$, a path $Q_j$
of length $k/2-2$ having (new) vertices $w_j = w_j^0,w_j^1, \dots, w_j^{k/2-2}$. (See
Fig.~\ref{fig:S_k} for an example.) 

\begin{figure}[!ht]
  \begin{center}
\includegraphics[width=\textwidth]{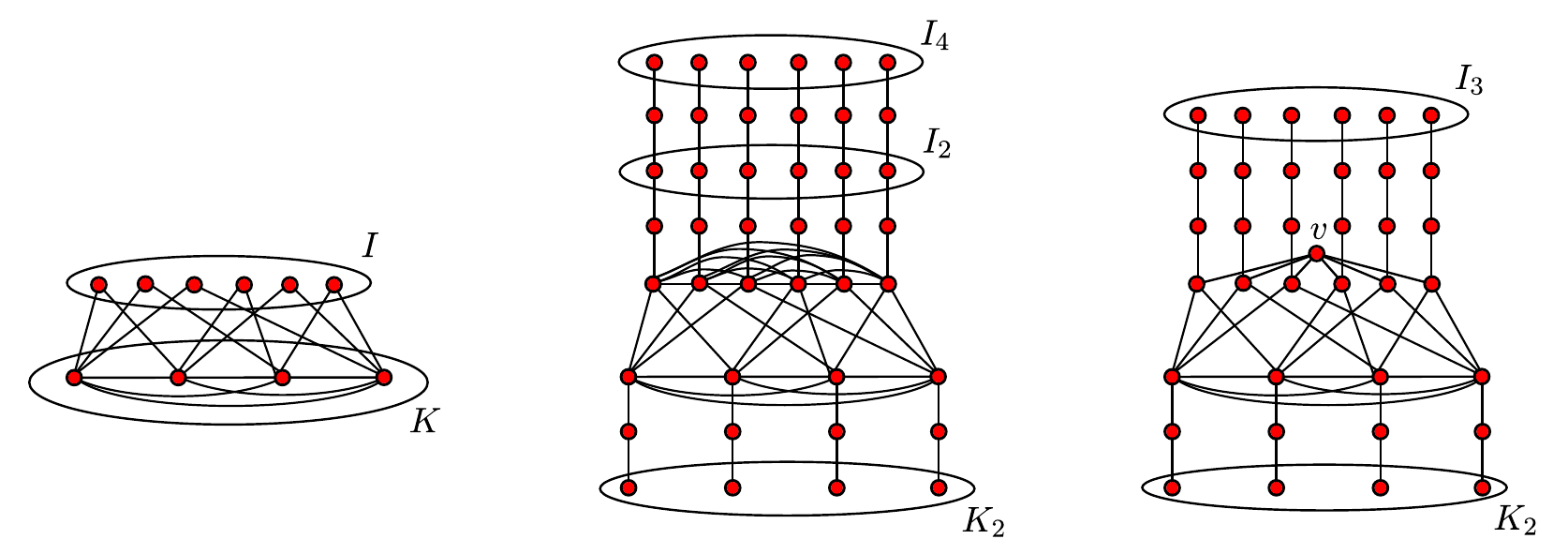}
  \end{center}
\caption{From left to right, graphs $S$, $S_8$ and $S_7$.} \label{fig:S_k}
\end{figure}

\item {\bf Case 2:} {\it $k\ge 3$ is odd.}

In this case, $S_k$ is the graph obtained from $S$ by
adding a new vertex $v$ and making it adjacent to every vertex in $I$,
adding, for each vertex $v_i \in I$, a path $P_i$ of length
$(k-1)/2$ having (new) vertices $v_i = v_i^0,v_i^1, \dots,
v_i^{(k-1)/2}$, and adding, for each vertex $w_j \in K$, a path $Q_j$
of length $(k-3)/2$ having (new) vertices $w_j = w_j^0,w_j^1, \dots, w_j^{(k-3)/2}$.
(See Fig.~\ref{fig:S_k} for an example.) 
\end{itemize}

Note that since we assumed that $K$ is a maximal clique in $S$,
the graph $S_k$ does not depend on the choice of a particular split partition.
A {\it complete split graph} is a split graph with a split partition $(K,I)$ such that every vertex in $K$ is adjacent to every vertex in $I$.

\begin{lemma}\label{lem:unbounded-pi}
For every split graph $S$, every $k\ge 3$ and every $h\ge 1$, we have:
\begin{enumerate}
  \item If $h\ge k$, then $(S_k)^h$ is a complete split graph.
  \item If $h<k$, then $(S_k)^h$ contains $S$ as induced subgraph.
\end{enumerate}
\end{lemma}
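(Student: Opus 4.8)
The plan is to analyze distances in $S_k$ carefully and then read off both statements from explicit distance computations. First I would set up notation for the "core" part of $S_k$: inside $S$ itself, $\diam(S)\le 3$ (as $S$ is a split graph), and in $S_k$ every original vertex of $K$ has a pendant path $Q_j$ of length $k/2-2$ (even case) or $(k-3)/2$ (odd case) attached, every original vertex of $I$ has a pendant path $P_i$ of length $k/2$ (even) or $(k-1)/2$ (odd), and in the odd case there is an extra apex vertex $v$ joined to all of $I$. The key quantitative claim is that the eccentricity of the endpoint $v_i^{\lceil k/2\rceil}$ of $P_i$ (farthest new vertex) is at most $k$ in $S_k$, so that $(S_k)^h$ is complete for $h\ge k$; and moreover that the two farthest path-endpoints $v_i^{k/2}$ and $v_{i'}^{k/2}$ (even case) are at distance exactly $k$, while their neighbors on the paths are at distance $k-1$, etc., so that when $h<k$ the power $(S_k)^h$ does not collapse the structure of $S$.

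For part~1, I would verify that for $h\ge k$ every pair of vertices of $S_k$ is within distance $k$: the worst case is between the two ends of two different $P_i$-paths. In the even case, such a path goes $v_i^{k/2}\to\cdots\to v_i\in I$, then across the clique $I$ (one edge, since we made $I$ a clique), then $\to v_{i'}\to\cdots\to v_{i'}^{k/2}$, of total length $k/2+1+k/2 = k+1$ — so this needs a slightly more careful routing: actually since $v_i,v_{i'}\in I$ and $I$ is a clique, we can instead route through a common neighbor or note that in fact $v_i$ and $v_{i'}$ are adjacent, giving length $k/2 + 1 + k/2$; I would instead use the bound via a vertex of $K$ (distance $1$ from each of $v_i,v_{i'}$ if the split graph is complete, and $\le 2$ in general through $K$) — the cleanest is to check directly that the eccentricity of each path-end is $\le k$ by casing on the target. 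In the odd case the apex $v$ shortens everything: any $P_i$-end is at distance $(k-1)/2$ from $v_i\in N(v)$, hence $(k-1)/2+1 = (k+1)/2$ from $v$, so two such ends are at distance $\le k+1$, and again one routes through $I$ or $v$ to get $\le k$. Once eccentricities are all $\le k$, it follows that $(S_k)^h$ is complete for $h\ge k$; to see it is a \emph{complete split graph} I would exhibit the split partition, taking $K' $ to be $K$ together with all the new path-internal vertices, or more simply observe that any complete graph is trivially a complete split graph (with $I'$ a single vertex), so this part is immediate once completeness is shown.

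For part~2, assume $h<k$. I would show that the map sending each original vertex of $S$ to itself is an isomorphism from $S$ onto the induced subgraph $(S_k)^h[V(S)]$. Two things must be checked: (a) if $xy\in E(S)$ then $\dist_{S_k}(x,y)=1\le h$, which is clear since $E(S)\subseteq E(S_k)$ except that in the even case we \emph{added} edges inside $I$ — but those extra edges only add edges among $I$, and in $S$ the set $I$ is independent, so I must be careful: in the even case $(S_k)^h[V(S)]$ would have $I$ as a clique, which is \emph{not} $S$. Here is the point where I expect the main obstacle, and the resolution is that in the even case we should instead take the image of $V(S)$ to be $\{v_i^1 : i\}\cup K$ (push each independent-set vertex one step out along its path), or more robustly, one checks that $\dist_{S_k}(v_i^{a},v_{i'}^{a})$ for the appropriate offset $a$ equals exactly the original adjacency structure; concretely in the even case $\dist_{S_k}(v_i,v_{i'}) = 2$ for distinct $i,i'$? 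No — $I$ is a clique in $S_k$, so that distance is $1$. So the correct embedding of the independent set must use interior path vertices. I would therefore define, in the even case, $\varphi(v_i) = v_i^{\,?}$ and $\varphi(w_j)=w_j^{\,?}$ chosen so that $\dist_{S_k}(\varphi(v_i),\varphi(v_{i'})) = k > h$ (non-adjacent in the power), $\dist_{S_k}(\varphi(w_j),\varphi(w_{j'})) \le h$ (adjacent), and $\dist_{S_k}(\varphi(v_i),\varphi(w_j))$ is $\le h$ iff $v_iw_j\in E(S)$ — the lengths of the $Q_j$ and $P_i$ paths in the definition of $S_k$ are precisely tuned (lengths $k/2$ and $k/2-2$, differing by $2$, because the distance $v_i$-to-$w_j$ in $S$ is $1$ if adjacent and the "extra" offsets must realize $h$ versus $h+1$ at the threshold $h=k-1$) to make this work. (b) Non-edges: if $xy\notin E(S)$ then $\dist_{S_k}(\varphi(x),\varphi(y))\ge k>h$; this again follows from the path-length bookkeeping, the worst/critical case being two $I$-vertices and an $I$-vertex versus a non-adjacent $K$-vertex. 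So the heart of the proof is a short but delicate distance calculation in $S_k$ verifying these three equalities/inequalities with the stated path lengths; the even/odd split and the role of the extra apex $v$ (odd case) versus the extra clique edges on $I$ (even case) are exactly what make the offsets come out right, and organizing this bookkeeping cleanly is the one real task.
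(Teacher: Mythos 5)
Your plan for part~2 is, in outline, the paper's own argument: the paper embeds $S$ into $(S_k)^h$ by choosing two ``level sets'' along the pendant paths, namely $I_\ell=\{v_i^\ell\}$ and $K_{\ell'}=\{w_j^{\ell'}\}$ with $(\ell,\ell')=(h/2,\,h/2-1)$ for even $h$ and $(\ell,\ell')=((h+1)/2,\,(h-3)/2)$ for odd $h$ (in the even-$k$ case; similarly for odd $k$), and then checks the three distance conditions you list. You correctly spotted that mapping $V(S)$ to itself fails because $I$ has been completed, and that the fix is to push out along the paths; what is missing is only the explicit choice of offsets and the verification, which is the actual content of that half of the proof. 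So part~2 is an acceptable, if incomplete, plan.

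Part~1, however, contains a genuine error. You claim that for $h\ge k$ every pair of vertices of $S_k$ is within distance $k$, so that $(S_k)^h$ is complete, and you dismiss the computation $\dist_{S_k}(v_i^{k/2},v_{i'}^{k/2})=k/2+1+k/2=k+1$ by appealing to ``more careful routing.'' There is no shorter route: each $P_i$ is a pendant path, so every walk from its endpoint $v_i^{k/2}$ to any vertex outside $P_i$ must first traverse all $k/2$ edges of $P_i$ down to $v_i$, and then reach $v_{i'}$ (at distance exactly $1$ in the even case, $2$ via the apex $v$ in the odd case) before climbing the other pendant path. Hence the endpoints of two distinct $I$-paths are at distance exactly $k+1$ in both cases, $\diam(S_k)=k+1$, and $(S_k)^k$ is \emph{not} complete whenever $|I|\ge 2$. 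This is not an accident of the construction but its whole point: these endpoints form the independent part $I'$ of the complete split graph $(S_k)^k$, and if they were within distance $k$ of each other the class $\Sigma_k$ could not satisfy $\pi(\Sigma_k)=k$. The correct argument (the paper's) is to set $I'=\{v_i^{k/2}\}$ (resp.\ $\{v_i^{(k-1)/2}\}$ for odd $k$) and $K'=V(S_k)\setminus I'$, and to verify separately that $I'$ is pairwise at distance $k+1$ (hence independent in $(S_k)^k$), that $K'$ is pairwise at distance at most $k$ (hence a clique), and that every vertex of $I'$ is within distance $k$ of every vertex of $K'$; completeness of $(S_k)^h$ only holds for $h\ge k+1$. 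Your reduction ``complete $\Rightarrow$ complete split'' is therefore unavailable at the critical value $h=k$, and part~1 of your proposal does not go through as written.
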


\begin{proof}
Assume the notation used in the definition of $S_k$.

We analyze two cases depending on the parity of $k$.
Suppose first that $k\ge 4$ is even.
For $0\le \ell \le k/2$, let $I_\ell = \{v_i^\ell\mid 1\le i\le s\}$.
For $0\le \ell \le k/2-2$, let $K_\ell = \{w_j^\ell\mid 1\le j\le r\}$.
Let $I' = I_{k/2}$ and $K'= V(G_k) \setminus I'$.
The vertices of $I'$ are mutually at distance $k+1$ in $S_k$, so they form an
independent set in $(S_k)^k$. The vertices of $K'$ are mutually at
distance at most $k$ in $S_k$, so they form a clique in $(S_k)^k$.
The vertices of $I'$ are at distance at most $k$ from the vertices
of $K'$ in $S_k$. It follows that the graph $(S_k)^k$ is a complete split graph. This settles the case $h = k$.
If $h>k$ then $(S_k)^h$ is a complete (and thus a complete split) graph.
For $h<k$, let us first consider the case when $h$ is even.
Then $I_{h/2}$ is an independent set in $(S_k)^h$, $K_{h/2-1}$
is a clique in $(S_k)^h$, and $v_i^{h/2}$ is adjacent to
$w_j^{h/2-1}$ in $(S_k)^h$ if and only if $v_i^{0}$ is adjacent to
$w_j^{0}$ in $S$. Thus, $(S_k)^h[I_{h/2}\cup K_{h/2-1}]$ is
isomorphic to $S$.
Consider now the case when $h$ is odd, $h > 1$. In this case, $I_{(h+1)/2}$
is an independent set in $(S_k)^h$, $K_{(h-3)/2}$ is a clique in
$(S_k)^h$, and $v_i^{(h+1)/2}$ is adjacent to $w_j^{(h-3)/2}$ in
$(S_k)^h$ if and only if $v_i^{0}$ is adjacent to $w_j^{0}$ in $S$.
Thus, $(S_k)^h[I_{(h+1)/2}\cup K_{(h-3)/2}]$ is isomorphic to $S$.

Suppose now that $k\ge 3$ is odd.
The sets $I_\ell$ and $K_\ell$ are defined analogously as in the case when $k$ is even (for appropriate ranges of $\ell$).
Similarly as above, it can be verified that for every $h\ge k$, the graph $(S_k)^h$ is a complete split graph.
For $h<k$, we have that the graph
$(S_k)^h[I_\ell\cup K_{\ell'}]$ is isomorphic to $S$ for a suitable choice of $\ell$ and $\ell'$, namely
for $(\ell,\ell') = (h/2,h/2-1)$ if $h\in \{1,\ldots, k-1\}$ is even, and
$(\ell,\ell') = ((h-1)/2,(h-1)/2)$ if $h\in \{1,\ldots, k-1\}$ is odd.
\end{proof}

For $k\ge 3$, let $\Sigma_k = \{S_k\mid S$ is a split graph$\}$.

\begin{theorem}\label{prop:unbounded-pi}
For every $k\geq 3$, $\pi(\Sigma _k) = k$.
\end{theorem}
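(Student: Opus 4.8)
The plan is to prove the two inequalities $\pi(\Sigma_k)\le k$ and $\pi(\Sigma_k)\ge k$ separately; both follow quickly from Lemma~\ref{lem:unbounded-pi}. For the upper bound, I would apply the first part of Lemma~\ref{lem:unbounded-pi} with $h=k$: every graph of the form $(S_k)^k$, where $S$ is a split graph, is a complete split graph. A complete split graph contains no induced $P_4$ (a short case analysis according to which of the two parts the four vertices of a putative $P_4$ would lie in), so it is a cograph and hence has clique-width at most $2$; alternatively one can exhibit an explicit two-label construction. Therefore $\cw\big((\Sigma_k)^k\big)\le 2$, so $(\Sigma_k)^k$ is of bounded clique-width, which gives $\pi(\Sigma_k)\le k$.

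For the lower bound it suffices to show that $(\Sigma_k)^h$ is of unbounded clique-width for every integer $h$ with $1\le h\le k-1$, since by definition $\pi(\Sigma_k)$ is the least $h\ge 1$ for which $(\Sigma_k)^h$ has bounded clique-width. Fix such an $h$. By the second part of Lemma~\ref{lem:unbounded-pi}, for every split graph $S$ the graph $(S_k)^h$, which belongs to $(\Sigma_k)^h$, contains $S$ as an induced subgraph; hence Proposition~\ref{prop:induced} yields $\cw\big((\Sigma_k)^h\big)\ge\cw(S)$. Taking the supremum over all split graphs $S$ shows that $(\Sigma_k)^h$ is of unbounded clique-width, because the class of split graphs is of unbounded clique-width. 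We conclude $\pi(\Sigma_k)\ge k$, and together with the upper bound this yields $\pi(\Sigma_k)=k$.

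The only ingredient that is not immediate from Lemma~\ref{lem:unbounded-pi} and Proposition~\ref{prop:induced} is the fact that split graphs are of unbounded clique-width, and I would single this out as the point to justify carefully. It is classical, but it can also be recovered from results already available in the paper: given a bipartite graph $B$ with parts $X$ and $Y$, turning $X$ into a clique produces a split graph, and this amounts to a single application of the operation ``replace an induced subgraph by its complement''. Since the class of bipartite permutation graphs is of power-unbounded, and hence in particular of unbounded, clique-width (by Proposition~\ref{prop:path-powers}), Proposition~\ref{prop:complementation} implies that the class of split graphs obtained in this way is of unbounded clique-width as well, which is all that is needed above.
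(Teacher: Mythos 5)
Your proof is correct and follows essentially the same route as the paper's: the upper bound comes from part~1 of Lemma~\ref{lem:unbounded-pi} together with the fact that complete split graphs are $P_4$-free cographs of clique-width at most $2$, and the lower bound from part~2 together with Proposition~\ref{prop:induced} and the unboundedness of the clique-width of split graphs. The only difference is that the paper simply cites~\cite{MR1726989} for this last fact, whereas you rederive it from Propositions~\ref{prop:path-powers} and~\ref{prop:complementation} by complementing one side of a bipartite permutation graph; that derivation is valid.
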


\begin{proof}
This is an immediate consequence of Lemma~\ref{lem:unbounded-pi}, the fact that
split graphs are of unbounded clique-width~\cite{MR1726989},
and the fact that every complete split graph is a cograph and hence of clique-width at most $2$~\cite{MR1743732}.
\end{proof}

\begin{corollary}\label{cor:unbounded-pi}
For every $k\geq 1$, there exists a graph class ${\cal G}$ with $\pi({\cal G}) = k$.
\end{corollary}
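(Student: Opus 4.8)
The plan is to derive Corollary~\ref{cor:unbounded-pi} directly from Theorem~\ref{prop:unbounded-pi}, which already supplies classes $\Sigma_k$ with $\pi(\Sigma_k)=k$ for every $k\ge 3$. So the only gap to fill is the small-index case: we need a class with $\pi({\cal G})=1$ and a class with $\pi({\cal G})=2$.

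For $k=1$, recall that by definition $\pi({\cal G})=1$ precisely when ${\cal G}$ is of bounded clique-width and nonempty; so any class of bounded clique-width works, e.g.\ the class of all complete graphs (clique-width at most $2$), or even the class consisting of a single one-vertex graph. For $k=2$, I would exhibit a graph class that is \emph{not} of bounded clique-width but whose squares are. The natural candidate along the lines of Section~\ref{sec:split} is the class $\Sigma_3 = \{S_3 \mid S \text{ a split graph}\}$ intersected with — wait, more directly: one wants $\pi=2$, so take a class ${\cal G}$ of unbounded clique-width such that $\diam({\cal G})=2$. Then $\pi({\cal G})\ge 2$ since ${\cal G}$ is of unbounded clique-width, and $\pi({\cal G})\le \diam({\cal G})=2$ by Proposition~\ref{prop:pw-bdd-cwd-powers}(2). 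Split graphs themselves have diameter at most $3$, not $2$, so instead I would use a class of unbounded clique-width consisting of graphs of diameter exactly $2$: for instance, take the class obtained from split graphs $S$ with split partition $(K,I)$ by adding a single new vertex adjacent to every vertex of $S$ (an apex). This apex forces diameter $\le 2$, while the original split graph survives as an induced subgraph, so unboundedness of clique-width is preserved (split graphs are of unbounded clique-width, and Proposition~\ref{prop:induced} applies). Hence this apexed-split-graph class ${\cal G}'$ satisfies $2\le \pi({\cal G}')\le 2$.

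Putting the pieces together: for $k=1$ use any (nonempty) class of bounded clique-width; for $k=2$ use the apexed-split-graph class ${\cal G}'$ just described; and for $k\ge 3$ use $\Sigma_k$ from Theorem~\ref{prop:unbounded-pi}. In each case the displayed value of $\pi$ is exactly $k$, which proves the corollary.

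I do not anticipate a serious obstacle here — the corollary is essentially a packaging statement. The only point requiring a moment's care is confirming that adding a universal apex vertex does not accidentally bound the clique-width; but since the original split graph remains as an induced subgraph, Proposition~\ref{prop:induced} gives $\cw(S) \le \cw(\text{apexed } S)$, so unboundedness is inherited. (If one prefers to avoid even this mild argument, an alternative for $k=2$ is to take any known class of unbounded clique-width and diameter $2$ from the literature, such as certain classes of graphs of diameter $2$ appearing in clique-width lower-bound constructions, and invoke Proposition~\ref{prop:pw-bdd-cwd-powers}(2) in the same way.)
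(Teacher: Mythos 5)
Your proposal is correct and follows essentially the same route as the paper: $\Sigma_k$ for $k\ge 3$, any class of bounded clique-width for $k=1$, and for $k=2$ a diameter-$2$ class of unbounded clique-width obtained by adding a universal vertex (the paper apexes grids where you apex split graphs, an immaterial difference). The key inequalities $\pi\ge 2$ from unbounded clique-width and $\pi\le\diam=2$ from Proposition~\ref{prop:pw-bdd-cwd-powers} are exactly the paper's argument.
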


\begin{proof}
For $k\ge 3$, the result follows from Theorem~\ref{prop:unbounded-pi}.
For $k = 1$, take ${\cal G}$ to be any graph class of bounded clique-width.
For $k = 2$, take ${\cal G}$ to be any graph class of graphs of diameter $2$ of unbounded clique-width
(for example, the class of graphs obtained from the class of grids by
adding to each grid a universal vertex).
\end{proof}}

\section{A sufficient condition for power-bounded clique-width}\label{sec:sufficient}

\begin{sloppypar}
By Proposition~\ref{prop:pw-bdd-cwd-powers}, every graph class of bounded
diameter is of power-bounded clique-width. We now generalize this observation
by giving a sufficient condition for power-boundedness of the clique-width
that is also applicable to graph classes of unbounded diameter.
\end{sloppypar}

\begin{theorem}\label{thm:sufficient-condition}
For every two positive integers  $k$ and $d$ and every class ${\cal G}$ of graphs of diameter at most $d$,
the class of graphs obtained from graphs in ${\cal G}$ by replacing
each of a set of at most $k$ edges with a path of length at least $1$
is of power-bounded clique-width.
\end{theorem}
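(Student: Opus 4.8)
The plan is to find integers $p=p(d,k)$ and $\ell=\ell(d,k)$, depending only on $d$ and $k$, such that $\cw((G')^p)\le\ell$ for every graph $G'$ obtained from a graph $G$ with $\diam(G)\le d$ by replacing each edge in a set $F$ with $|F|\le k$ by a path; this establishes $(p,\ell)$‑power‑bounded clique‑width for the class in question. Since diameter is measured per component and subdivision preserves components, we may assume $G$ is connected, and clearly we may assume every edge of $F$ is replaced by a path of length at least $2$. Write $F=\{e_1,\dots,e_r\}$ with $r\le k$, let $P_i$ be the path with new internal vertices replacing $e_i=u_iv_i$, and let $C_1,\dots,C_m$ ($m\le k+1$) be the components of $G-F$. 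The argument has two ingredients: first, a bound on the diameter of each $C_j$; then, a structural analysis of $(G')^p$ for large $p$ showing it is built from bounded‑size pieces and path powers in a clique‑width‑preserving way.

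For the first ingredient I would show that $\diam(C_j)$ is bounded by a function of $d$ and $k$. Let $B_j$ be the set of endpoints of edges of $F$ lying in $C_j$, so $1\le|B_j|\le 2k$ (as $G$ is connected and $F\ne\emptyset$). One checks that $\dist_{C_j}(x,B_j)\le d$ for every $x\in V(C_j)$: otherwise any walk of $G$ from $x$ would need more than $d$ steps before it could first leave $C_j$ (leaving $C_j$ means traversing an edge of $F$, whose endpoint in $C_j$ lies in $B_j$), so $\mathrm{Ball}_G(x,d)\subseteq V(C_j)$; since $\diam(G)\le d$ this forces $V(G)=V(C_j)$, and then a shortest $G$‑path from $x$ to any endpoint of an edge of $F$ contradicts the assumption. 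Hence $V(C_j)$ is covered by the $\le 2k$ balls of $C_j$‑radius $d$ about $B_j$; as $C_j$ is connected, tracing a shortest $C_j$‑path between two vertices of $B_j$ shows that the auxiliary graph on $B_j$ joining vertices at $C_j$‑distance $\le 2d+1$ is connected, so $\dist_{C_j}(b,b')\le(2d+1)(2k-1)$ for all $b,b'\in B_j$, and therefore $\diam(C_j)\le 2d+(2d+1)(2k-1)$.

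Now fix $p$ larger than this diameter bound and set $H:=(G')^p$. Then each $C_j$ induces a clique of $H$. For each $P_i$, let $\mathcal D_i$ be the set of its internal vertices at $P_i$‑distance more than $2p$ from both $u_i$ and $v_i$ (so $\mathcal D_i=\emptyset$ unless $P_i$ is long). A short computation with distances in $G'$ shows that from a vertex of $\mathcal D_i$ every walk leaving the interior of $P_i$ first reaches $u_i$ or $v_i$, which costs more than $2p$ steps; consequently $H[\mathcal D_i]$ is precisely the path power $(P_{|\mathcal D_i|})^p$, there are no $H$‑edges between $\mathcal D_i$ and any $C_j$ or between $\mathcal D_i$ and $\mathcal D_{i'}$ for $i\ne i'$, and the only $H$‑edges from $\mathcal D_i$ to the rest of $H$ join a bounded set of vertices of $\mathcal D_i$ to a bounded set of internal vertices of $P_i$ near its ends. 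For the part of $H$ outside the $\mathcal D_i$'s, the key identity is $\dist_{G'}(z,w)=\min_{b\in B_j}\bigl(\dist_{C_j}(z,b)+\dist_{G'}(b,w)\bigr)$ for $z\in V(C_j)$ and $w\notin V(C_j)$, obtained by taking $b$ to be the vertex of $B_j$ at which a shortest $z$–$w$ path first leaves $C_j$ (using that an edge of $G'$ with both ends in $V(C_j)$ is an edge of $C_j$). Hence two vertices of $C_j$ with the same vector of $C_j$‑distances to $B_j$ have the same $H$‑neighbourhood outside $C_j$, and being adjacent to everything else in $C_j$ they are true twins in $H$; since $|B_j|\le 2k$ and $\diam(C_j)$ is bounded, this splits each $C_j$ into a bounded number of classes, each of which is a module of $H$ inducing a clique.

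To finish, let $\tilde H$ be $H$ with each such twin class contracted to one vertex; then $H$ is obtained from $\tilde H$ by substituting cliques for those vertices, so every prime induced subgraph of $H$ with at least three vertices contains no two twins and hence embeds into $\tilde H$, giving $\cw(H)\le\max\{2,\cw(\tilde H)\}$ by Propositions~\ref{prop:induced} and~\ref{prop:cwd-prime2}. Let $S\subseteq V(\tilde H)$ consist of all (boundedly many) contracted twin‑class vertices together with all internal path vertices not lying in any $\mathcal D_i$ (at most $4p$ per path); then $|S|$ is bounded by a function of $d$ and $k$, and $\tilde H-S$ is a disjoint union of path powers $(P_n)^p$, each of clique‑width at most $p+2$ by~\cite{MR2972287}. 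Since deleting one vertex increases the clique‑width by a factor of at most $2$ — duplicate every label in an optimal expression, using a second copy precisely for the would‑be neighbours of the deleted vertex, and join it at the end — we get $\cw(\tilde H)\le 2^{|S|}(p+2)$, whence $\cw(H)$ is bounded in terms of $d$ and $k$ only. The step I expect to be the main obstacle is the structural analysis of $H$ around the blobs $C_j$: a priori the bipartite adjacency in $(G')^p$ between two blobs, or between a blob and a subdivided path, is complicated because the relevant $G'$‑distances range over an interval of length $\diam(C_j)$ straddling $p$; the distance identity above, collapsing each blob to boundedly many twin classes, is exactly what tames this, after which only disjoint path powers attached along bounded interfaces remain and the known clique‑width bound for path powers closes the argument.
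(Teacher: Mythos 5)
Your proof is correct and follows essentially the same route as the paper's (Lemma~\ref{lem:sufficient} together with the reduction used in the proof of the theorem): bound the diameter of the components remaining after the subdivided edges are removed, take a power $p$ exceeding that bound so that each component becomes a clique, group the vertices of each component into boundedly many twin classes according to their distance vectors to the attachment vertices, note that the deep interiors of the subdivided paths induce path powers, and finish by passing to the quotient over twin classes and absorbing a bounded set of remaining vertices. The only differences are presentational: you prove directly the two facts the paper imports by citation, namely the bounded diameter increase caused by deleting $k$ edges~\cite{MR902717} and the boundedness of clique-width under deletion of a bounded vertex set~\cite{MR2079015}.
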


To prove this theorem, we first state and prove a technical lemma.
Recall that two vertices $u$ and $v$ in a graph $G$ are said to be {\em twins} if
 $N(u)\setminus\{v\} =
N(v) \setminus \{u\}$. It is easy to see (and well known) that the twin relation is
an equivalence relation on $V(G)$, every equivalence class is either a
clique or an independent set, and for every two equivalence
classes, there are either all edges or no edges between them.
Thus, each equivalence class is a module, and
the quotient graph of $G$ with respect to the partition of $V(G)$ into twin classes
is well defined. A {\it $2$-path} in a graph $G$ is an induced path in
$G$ all the vertices of which are of degree $2$ in $G$.

\begin{lemma}\label{lem:sufficient}
For positive integers $k$ and $d$, let {${\cal G}(k,d)$} be the set
of all graphs $G$ that contain a set ${\cal P}$ of at most $k$
$2$-paths such that the diameter of $G-\cup_{P\in {\cal P}}V(P)$ is at most~$d$. Then, for every pair of
positive integers $k$ and $d$, the graph class {${\cal G}(k,d)$} is
of power-bounded clique-width.
\end{lemma}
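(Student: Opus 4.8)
The plan is to reduce the statement to the case of a \emph{single} $2$-path by an inductive argument on $k$, and then to handle one $2$-path by a careful labelling of the relevant power $G^h$ using the bounded-diameter part $G' = G - V(P)$ as a ``core'' of bounded clique-width. Recall that Proposition~\ref{prop:pw-bdd-cwd-powers} tells us that a graph of diameter at most $d$ has a power $G^d$ that is a disjoint union of cliques, hence of clique-width at most $2$; so the core contributes essentially nothing once we raise to a sufficiently large power. The key point is then to understand, in $G^h$, how the vertices of a long $2$-path $P$ attach to the rest of the graph and to each other. Writing $P = p_0 p_1 \cdots p_m$ with $p_1, \dots, p_{m-1}$ of degree $2$ in $G$ and $p_0, p_m$ the two endpoints (which lie in $G'$ or are among the degree-$2$ vertices of other $2$-paths), observe that for a fixed $h$, a vertex $p_i$ with $i$ in the ``middle'' of $P$ sees, in $G^h$, only vertices $p_j$ with $|i-j|\le h$; only the $O(h)$ vertices of $P$ near $p_0$ or near $p_m$ can see anything outside $P$. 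Thus $G^h[V(P)]$ is the $h$-th power of a path (clique-width at most $h+2$ by the result of~\cite{MR2972287} quoted in the paper), and the ``interaction'' between $P$ and $G'$ is confined to a bounded number ($\le 2h$) of vertices near the two endpoints.

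Concretely, I would first choose $h = h(k,d)$ appropriately — it will be roughly a constant times $d$, large enough that $(G')^h$ is a disjoint union of cliques, where $G'$ is obtained from $G$ by deleting the interiors of all $k$ chosen $2$-paths. For the single-$2$-path case, build an expression for $G^h$ as follows: take a bounded-clique-width expression for $(G')^h$ (clique-width $\le 2$); prepend to it, using a bounded number of extra labels, the at most $2h$ ``boundary'' vertices of $P$ together with all their edges to $G'$ and to each other; then append the remaining ``interior'' vertices of $P$ by traversing the path power $P^h$ along the path order, which needs only $h+2$ labels and, crucially, requires \emph{no} edges to $G'$ and only edges within the already-placed boundary window. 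The arithmetic of how many labels are simultaneously ``live'' is routine and gives a bound depending only on $h$, hence only on $k$ and $d$.

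For the general case with $k$ $2$-paths, I would induct: remove one $2$-path $P_k$ from $G$; the resulting graph $G - V(P_k)$ still contains the other $k-1$ $2$-paths $P_1, \dots, P_{k-1}$ (they stay induced $2$-paths, since deleting vertices of one degree-$2$ path cannot raise the degree of vertices on another), and deleting all of their interiors from it yields a graph of diameter $\le d$; so by induction $(G - V(P_k))^{h}$ has clique-width $\le \ell_{k-1}(d)$ for a suitable $h$. Then re-attach $P_k$ exactly as in the base case: its interior vertices contribute a path-power block, and only $O(h)$ boundary vertices of $P_k$ interact with the rest, so the clique-width of $G^h$ grows by only an additive/multiplicative constant depending on $h$. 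Care is needed because the boundary vertices of $P_k$ may be interior vertices of some $P_j$ ($j<k$) — but there are at most $2h$ such vertices, and we can always afford to ``pull them out'' into dedicated labels, so this causes no trouble beyond bookkeeping. Taking $k$ and $\ell_k(d)$ as the final constants proves that ${\cal G}(k,d)^h$ has bounded clique-width, hence ${\cal G}(k,d)$ is of power-bounded clique-width.

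The main obstacle I anticipate is the bookkeeping in the single-path step: one must verify that when the $2$-path $P$ is long, the vertices ``in its middle'' genuinely have no neighbours in $G^h$ outside the fixed window of $2h+1$ consecutive path-vertices, which relies on the degree-$2$ condition propagating distances correctly (a shortest path in $G^h$ from a middle vertex of $P$ to anything off $P$ must pass through one of the two endpoints and so has length $> h$ once $P$ is long enough), and that the clique-width expression can be written so that these windows slide along without ever needing more than a bounded number of active labels. Everything else — the bounded-diameter core becoming a cluster of cliques, path powers having clique-width $h+2$, and Propositions~\ref{prop:induced} and~\ref{prop:cwd-prime2} for assembling the pieces — is quotable or elementary.
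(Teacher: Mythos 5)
Your structural picture of a long $2$-path in $G^h$ (a sliding window, a path-power interior, and only $O(h)$ vertices near each endpoint interacting with the rest) is correct and matches the key insight of the paper's proof. However, the inductive step on $k$ has a genuine gap: deleting $P_k$ and invoking the induction hypothesis gives a bound on $\cw\bigl((G-V(P_k))^{h}\bigr)$, whereas your re-attachment step needs a bound on $\cw\bigl(G^{h}[V(G)\setminus V(P_k)]\bigr)$, and these graphs are in general different. If $P_k$ has length at most $h$, it is a shortcut of bounded length between its two attachment vertices $a$ and $b$ (which may lie in different components of $G-X$, or far apart in $G-V(P_k)$), so $G^{h}$ contains edges $uv$ with $u,v\notin V(P_k)$ whose only short witnessing walk runs through $P_k$. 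Your re-attachment only ever creates edges incident to $V(P_k)$, so these shortcut edges are never produced, and the induction hypothesis says nothing about the graph that actually contains them. This is repairable---for $P_k$ of length greater than $h$ no path of length at most $h$ can traverse it and the two graphs agree off $V(P_k)$, while a $P_k$ of length at most $h$ can be folded into the bounded-diameter core (raising $d$ to $d+O(h)$) or into a bounded exceptional set---but as written the step does not go through. A secondary issue: ``prepend the boundary vertices together with all their edges to $G'$'' requires joining each new vertex to a \emph{specific subset} of a clique of $(G')^{h}$ (those core vertices within the appropriate distance of the attachment point), which a two-label expression for $(G')^{h}$ cannot address; you must either invoke the result of~\cite{MR2079015} that adding a vertex set $W$ of bounded size changes the clique-width by an amount depending only on $|W|$, or pre-label the core by distance profiles to the path endpoints.

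That second mechanism is exactly how the paper proceeds, and it removes the need for induction altogether: vertices of $G-X$ with the same vector of (truncated) distances to the at most $2k$ endpoints of the paths in ${\cal P}$ are twins in $G^{d}$, so by Proposition~\ref{prop:cwd-prime2} one may pass to the quotient by twin classes; this quotient consists of a set $Y'$ of at most $(k+1)(d+1)^{2k}+2kd$ classes together with the deep interiors of the paths, whose induced subgraph of $G^d$ is a disjoint union of path powers, and the conclusion follows from~\cite{MR2972287} and~\cite{MR2079015}. If you either reorganize your argument along these lines or repair the induction as indicated above (and note, as a cosmetic point, that deleting $V(P_k)$ may drop an endpoint of another $P_j$ to degree one, so the remaining paths need not literally be $2$-paths in $G-V(P_k)$), the rest of your outline is sound.
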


\begin{proof}
Let $G\in {{\cal G}(k,d)}$ and let ${\cal P}$ be a set of $r$ $2$-paths in $G$ such that $r\le
k$ the diameter of the graph $G-X$ is at most $d$,
where $X:=\cup_{P\in {\cal P}}V(P)$.
We will show that $\cw(G^{d})\le p$ for some integer
$p$ depending only of $k$ and $d$, but not on $G$.
We may assume that $G$ is connected, that $X\neq \emptyset$ (otherwise $G$ is of bounded diameter, and
Proposition~\ref{prop:pw-bdd-cwd-powers}
applies), and that $G-X$ is non-empty (since paths are of clique-width at most $3$).
Let $C_1,\ldots, C_m$ denote the vertex sets of the components of $G-X$. Since $G$ is connected and $G[X]$ has at most $k$
components, each of which has neighbors in at most two components
of $G-X$, we have $m\le k+1$.

Let us analyze the structure of $G^d$. First, the assumption on the diameter implies that the subgraph of $G^d$ induced by each $C_i$ is
complete. For a vertex $x\in G-X$ and an endpoint $w$ of a component (path) in $G[X]$, let us define $$f(x,w) = \left\{
                                                                                        \begin{array}{ll}
                                                                                          \dist_G(x,w), & \hbox{if $\dist_G(x,w)\le d$;} \\
                                                                                         d+1, & \hbox{otherwise.}
                                                                                        \end{array}
                                                                                      \right.$$
Moreover, let $F(x)$ denote the array of values $f(x,w)$ for all endpoints $w$ of components of $G[X]$ (in some fixed order).
Clearly, this assignment of arrays to vertices in $G-X$ results in at most $(d+1)^{2k}$ different arrays.
Let us now define on each set $C_i$ an equivalence relation $\sim_i$ by the rule
$x\sim_i y$ if and only if $F(x) = F(y)$. Every such relation will have at most $(d+1)^{2k}$  equivalence classes.
Moreover, for every $i$, every two vertices $x,y\in C_i$ such that $x\sim_i y$
satisfy $N_{G^d}[x] = N_{G^d}[y]$; in particular, $x$ and $y$ are twins in $G^d$.
This can be proved by observing that for every vertex $z\in V(G)\setminus C_i$,
every shortest path from $x$ to $z$ goes through at least one endpoint $w$ of a path in ${\cal P}$, and using the fact that
if $\dist_G(x,z)\le d$ then $\dist_G(x,w)= \dist_G(y,w)$.

Proposition~\ref{prop:cwd-prime2} implies that $\cw(G^d) = \max\{\cw(H)\mid H$ is a prime induced subgraph of $G^d\}$.
In particular, since no prime induced subgraph of $G^d$ with at least three vertices contains a pair of twin vertices (as they would form a non-trivial module), this implies that the clique-width of $G^d$ equals the clique-width of the quotient graph $G'$ of
$G^d$ with respect to the partition of $V(G^d)$ into twin classes.
Hence, it is sufficient to show that the clique-width of $G'$ is bounded.

Let $U$ denote the set of vertices in $X$ that are in $G$ at distance at least $d+1$ from $V(G-X)$.
Let $Y'$ denote the set of twin classes of $G^d$
not containing any vertex of $U$.
Previous considerations imply that
{ $|Y'|\le (k+1)\cdot (d+1)^{2k}+2kd$}.
We may therefore assume that $U$ is non-empty (since otherwise $V(G') = Y'$ is of bounded size, and we are done).
Note that every vertex $u\in U$ has no neighbors in $G^d$ in $G-X$, and also no neighbors in any
component of $G[X]$ other than the component of $G[X]$ containing $u$. This implies that
the subgraph $G^d$ induced by $U$ is the $d$-th power of a disjoint union of paths.
Since $\cw(Q) = d+2$ if $Q$ is the $d$-th power of a path with at
least $(d+1)^2$ vertices~\cite{MR2972287}, this implies that the
clique-width of $G[U]$, and hence also of $G'-Y'$, is bounded from above by a function of $d$.

To complete the proof, recall that there exists a
function $g$ such that for every graph $H$ and every subset
$W\subseteq V(H)$, we have $\cw(H)\le g(\cw(H-W),|W|)$~\cite{MR2079015}. {Since $\vert Y'\vert$ is bounded},
this result implies that the clique-width of $G'$ is also
bounded by a function of $d$ and $k$.
\end{proof}

We are now ready to prove Theorem~\ref{thm:sufficient-condition}.

\begin{proof}[Proof of Theorem \ref{thm:sufficient-condition}]
Let $k$ and $d$ be two positive integers, and let $G$ be
a graph such that there exists a graph $H$ with the following properties:
\begin{itemize}
  \item[$(i)$] Every connected component of $H$ is of diameter at most $d$, and
  \item[$(ii)$] There exists a set $F\subseteq E(H)$ with $|F|\le k$ such that $G$ is the graph obtained from $H$ by replacing each edge $e\in F$ with a path of length at least~$2$.
\end{itemize}
Then, the graph $H' = H-F$ is of diameter at
most~$(k+1)d$~\cite{MR902717}. Thus, $G\in {{\cal G}(k,(k+1)d)}$
and the conclusion follows from Lemma~\ref{lem:sufficient}.
\end{proof}

Note that the result of Theorem~\ref{thm:sufficient-condition} is sharp, in the sense that neither of the two boundedness conditions can be dropped.
There exist graphs of unbounded diameter that are of power-unbounded clique-width, for example the class of grids. Moreover, if the number of subdivided edges is unbounded, then the resulting graph class can be of power-unbounded clique-width. Indeed, for every $k\ge 1$, let $G_{n,k}$ be the graph obtained from the complete graph $K_n$ by attaching to it ${n \choose 2}$ chordless paths of length $2k$, each connecting a different pair of vertices of $K_n$. Then, the $k$-th power of $G_{n,k}$ contains the graph $K_n^*$ as an induced subgraph, where $K_n^*$ denotes the graph obtained from a complete graph on $n$ vertices by gluing a triangle on every edge.
As shown in~{\cite{MR1726989}}, the clique-width of graphs $K_n^*$ is unbounded. Hence, for every $k\ge 1$, the family of graphs $\{G_{n,k}\mid n\ge 2\}$ is of power-unbounded clique-width.

\section{Hereditary graph classes of power-bounded and power-unbounded clique-width}\label{sec:hereditary}

{In this section, we develop several results related to power-boundedness of the clique-width in hereditary graph classes.
We start with a characterization for
graph classes
defined by a single forbidden induced subgraph.

}

\begin{theorem}\label{thm:monogenic}
For every graph $H$, the class of $H$-free graphs is of
power-bounded clique-width if and only if $H$ is a disjoint union
of paths.
\end{theorem}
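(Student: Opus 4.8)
The plan is to prove both implications, in each direction reducing to facts already established in the excerpt.

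\medskip
\noindent\emph{The ``if'' direction.} Assume $H$ is a disjoint union of paths, say with $n=|V(H)|$ vertices in total and $t$ connected components. First I would observe that $H$ is an induced subgraph of the single path $P_m$ with $m=n+t-1$: lay the path-components of $H$ consecutively along $P_m$, leaving exactly one unused vertex between consecutive components, so that the unused vertices certify that there are no edges between distinct components in the induced subgraph. Hence the class of $H$-free graphs is contained in the class $\textit{Free}(P_m)$ of $P_m$-free graphs, and since every subclass of a class of power-bounded clique-width is again of power-bounded clique-width, it suffices to treat $\textit{Free}(P_m)$. The key (elementary) point is that in a connected graph $G$ a shortest path between two vertices at distance $\diam(G)$ is an \emph{induced} path on $\diam(G)+1$ vertices; therefore a connected $P_m$-free graph has diameter at most $m-2$. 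Applying this to each connected component, every $P_m$-free graph has diameter at most $m-2$, so $\textit{Free}(P_m)$ is of bounded diameter and Proposition~\ref{prop:pw-bdd-cwd-powers} (specifically the inequality $\pi(\mathcal{G})\le\diam(\mathcal{G})$) gives that it, and hence the class of $H$-free graphs, is of power-bounded clique-width.

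\medskip
\noindent\emph{The ``only if'' direction.} I would argue the contrapositive: if $H$ is not a disjoint union of paths, then $H$ either contains a cycle or has a vertex of degree at least $3$ (a forest of maximum degree at most $2$ is a disjoint union of paths). In the first case, let $g=\mathrm{girth}(H)\ge 3$; a shortest cycle of $H$ is chordless, so $C_g$ is an induced subgraph of $H$, and every graph of girth at least $g+1$ has no induced $C_g$ and is therefore $H$-free. Thus the class of graphs of girth at least $g+1$ is contained in the class of $H$-free graphs, and since $g+1\ge 4\ge 3$, Example~\ref{cor:girth} shows this class is of power-unbounded clique-width, hence so is the class of $H$-free graphs. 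In the second case $H$ is a forest, so a vertex $v$ of degree at least $3$ together with three of its neighbours induces a claw $K_{1,3}$ (the three neighbours are pairwise non-adjacent, since a triangle would be a cycle); hence $K_{1,3}$ is an induced subgraph of $H$ and every claw-free graph is $H$-free. The class ${\cal P}^+$ of path powers consists of unit interval graphs, which are claw-free (equivalently, in $(P_n)^k$ the centre of a claw would have to lie within distance $k$ of two vertices that are more than $2k$ apart, which is impossible); so ${\cal P}^+$ is contained in the class of $K_{1,3}$-free graphs and hence in the class of $H$-free graphs. By Proposition~\ref{prop:path-powers} the class ${\cal P}^+$ is of power-unbounded clique-width, so the class of $H$-free graphs is as well.

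\medskip
\noindent\emph{Main obstacle.} The argument is short because it leans on earlier results, so there is no single hard step; the points that need care are the observation that a shortest path in a connected graph is induced (this is exactly what turns path-freeness into a diameter bound), and, on the converse side, the small structural facts that the girth cycle of $H$ is chordless and that the neighbours of a high-degree vertex of a forest form an independent set, together with the (standard) fact that path powers, being unit interval graphs, are claw-free. The remaining work---checking the induced-subgraph embeddings of $H$ into $P_{n+t-1}$, of $C_g$ and $K_{1,3}$ into $H$, and invoking monotonicity of power-(un)boundedness under passing to subclasses---is routine bookkeeping.
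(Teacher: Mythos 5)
Your proof is correct and follows essentially the same route as the paper's: the same trichotomy ($H$ has a cycle; $H$ is a forest with a degree-$3$ vertex; $H$ is a disjoint union of paths), reducing the first two cases to Example~\ref{cor:girth} and Proposition~\ref{prop:path-powers} respectively, and the last to the diameter bound of Proposition~\ref{prop:pw-bdd-cwd-powers}. The extra details you supply (embedding $H$ into a single path, shortest paths being induced, chordlessness of a girth cycle, claw-freeness of path powers) are exactly the routine facts the paper leaves implicit.
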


\begin{proof}
{If $H$ has a cycle, then its girth is finite. Let $k$ be the girth of $H$.}
Then, every graph with girth at least $k+1$ is $H$-free. By
Example~\ref{cor:girth}, the class of graphs of girth at least
$k+1$ is of power-unbounded clique-width, hence the same holds
also for the larger class of $H$-free graphs.

{If $H$ is acyclic and $\Delta(H)\ge 3$, then} $H$ contains a claw as an
induced subgraph. Hence the class of claw-free graphs, and in
particular, the class of unit interval graphs, is a subclass of
$H$-free graphs. The power-unboundedness of the clique-width now
follows from Proposition~\ref{prop:path-powers}.

{If $H$ is acyclic and $\Delta(H)\le 2$ then $H$ is the disjoint union of paths,
thus an induced subgraph of a path. Hence, the $H$-free graphs are of
bounded diameter, and of power-bounded clique-width due to
Proposition~\ref{prop:pw-bdd-cwd-powers}.}
\end{proof}

{The case of forbidding two induced subgraphs instead of one turns out to be significantly more difficult.
In the rest of the section, we develop a complete characterization of the graph classes of the form ${\it Free}(\{H,H'\})$ that are of power-bounded clique-width, where $H$ and $H'$ are connected graphs. This is done in Section~\ref{subsec:hereditary}, using
results developed in Section~\ref{sec:sufficient} and in the next subsection.}

\subsection{A sufficient condition for power-unbounded clique-width {in hereditary classes}}\label{subsec:suff-pw-unbdd}

In this section, we adapt the approach from~\cite{MR2428563, MR2539362,MR3028197} to the notion of power-bounded
clique-width. For $i\ge 1$, let $H_i$ denote the graph depicted in Fig.~\ref{fig:H_i}.

\begin{figure}[!ht]
  \begin{center}
\includegraphics[height=22mm]{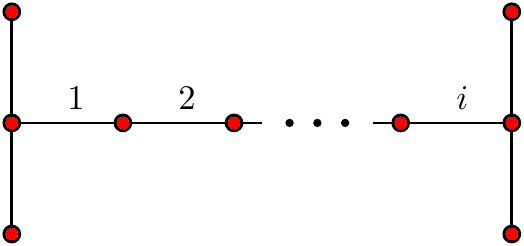}
  \end{center}
\caption{Graphs $H_i$.} \label{fig:H_i}
\end{figure}
For $k\ge 3$,
let us denote ${\cal S}_k:={\it Free}(\{K_{1,4},C_3,\ldots,C_k,H_1,\ldots,H_k\})$, and let
${\cal S}:=\bigcap\limits_{k\ge 3}{\cal S}_k$.
Note that a graph is in ${\cal S}$ if and only if every connected component of $G$ is of the form $S_{i,j,k}$
represented on the left in Fig.~\ref{fig:ST} (where the values of $i,j,k\ge 0$ may depend on component).

\begin{figure}[!ht]
  \begin{center}
\includegraphics[width=0.8\linewidth]{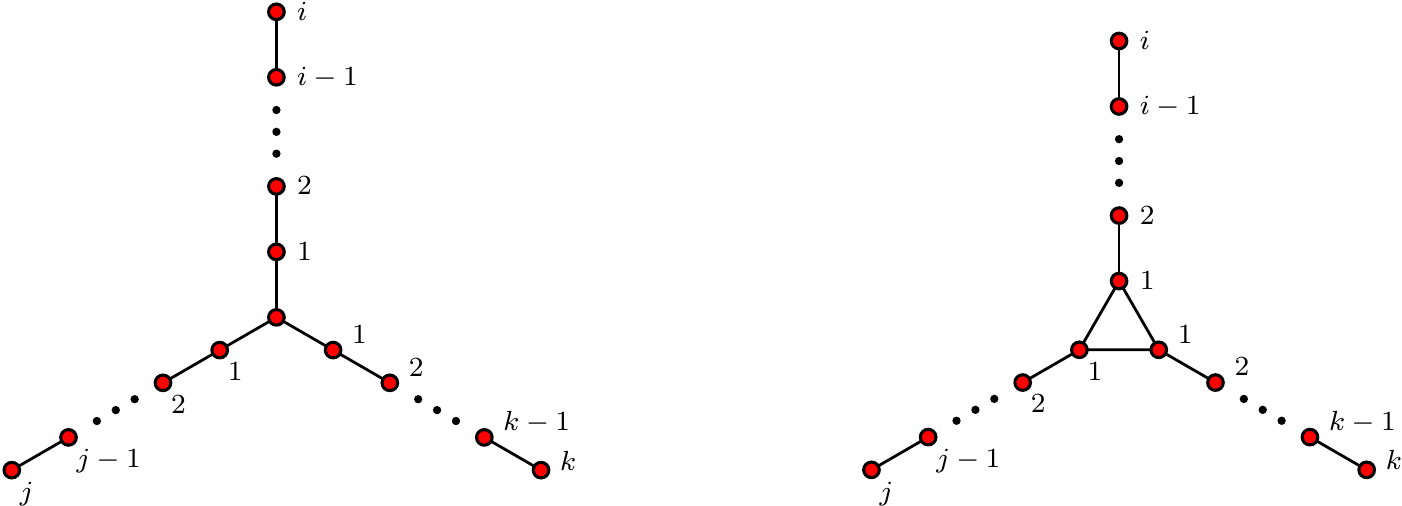}
  \end{center}
\caption{Graphs $S_{i,j,k}$ (left) and $T_{i,j,k}$ (right).}
\label{fig:ST}
\end{figure}

Denote the class of line graphs of graphs in ${\cal S}_k$ by ${\cal T}_k$.

\begin{proposition}\label{prop:Sk}
For every $k\ge 3$, ${\cal S}_k$ and  ${\cal T}_k$ are of power-unbounded clique-width.
\end{proposition}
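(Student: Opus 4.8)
The plan is to exhibit, for each fixed $k\ge 3$, an explicit infinite family of graphs inside ${\cal S}_k$ (respectively ${\cal T}_k$) whose $m$-th powers have arbitrarily large clique-width for every fixed $m$. Concretely, I would use the subdivided-grid family: let $G_{n,k}$ be the graph obtained from the $n\times n$ grid by subdividing each edge $k-1$ times (so that each original edge becomes a path with $k$ edges), which by Example~\ref{ex:grids} has power-unbounded clique-width. The first thing to check is that $G_{n,k}\in {\cal S}_k$: every vertex has degree at most $4$ (the grid has maximum degree $4$ and subdivision vertices have degree $2$), so $K_{1,4}$-freeness is immediate (in fact one should take the subdivided grid with the four corner "pendant" adjustments or simply note degree $\le 4$ gives $K_{1,4}$-freeness — if the intended forbidden graph is the claw one must instead start from a subdivided wall, which has maximum degree $3$; I will use the wall to be safe). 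Since every cycle in a grid has length at least $4$ and each edge is subdivided into a path of length $k$, every cycle in $G_{n,k}$ has length at least $4k\ge k+1$, which kills $C_3,\dots,C_k$; and the graphs $H_1,\dots,H_k$ in Fig.~\ref{fig:H_i} are small graphs with a vertex of degree $3$ "too close" to another branch vertex, so they cannot occur as induced subgraphs once every branch vertex is isolated by long induced paths — this is exactly the point of subdividing $k-1$ times, and it is the reason the parameter in ${\cal S}_k$ and the subdivision length are coupled.

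The key steps, in order, would be: (1) define the family — subdivided walls $W_{n,k}$ (wall of height $n$ with each edge replaced by a path of length $k$), noting $\Delta(W_{n,k})\le 3$; (2) verify membership in ${\cal S}_k$ by checking girth $\ge 4k$ and checking directly that none of $K_{1,4}, C_3,\dots,C_k, H_1,\dots,H_k$ embeds as an induced subgraph, using that every degree-$\ge 3$ vertex of $W_{n,k}$ has all other branch vertices at distance $\ge k$; (3) show $\{W_{n,k}\mid n\ge 1\}$ has power-unbounded clique-width — this follows because the $n\times n$ grid is a minor of $W_{n,k}$ (walls contract onto grids, and subdivisions only help), so by Proposition~\ref{prop:bdd-deg} (the family has bounded degree, hence power-bounded clique-width would force bounded treewidth, contradicting that grids/walls have unbounded treewidth); (4) conclude ${\cal S}_k$ is of power-unbounded clique-width. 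For ${\cal T}_k$: take line graphs $L(W_{n,k})$. One shows $W_{n,k}$ is a subdivision of itself with the wall structure, so $L(W_{n,k})\in{\cal T}_k$ by definition, and then one argues $L(W_{n,k})$ still has power-unbounded clique-width. The cleanest route is Proposition~\ref{prop:complementation} / closure properties, or simply: $L(W_{n,k})$ contains a "nice" subdivided grid-like graph as an induced subgraph — indeed for graphs of maximum degree $\le 3$ the line graph is obtained by replacing vertices by triangles (or edges/smaller cliques) and each such substitution of a bounded clique into a vertex changes clique-width of powers only boundedly, by Proposition~\ref{lem:power-bdd-prime}-type reasoning, so $(L(W_{n,k}))^m$ contains $(W_{n,k})^{m'}$ as an induced subgraph for a controlled $m'$, and we invoke Proposition~\ref{prop:induced} together with step (3).

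The main obstacle I anticipate is step (2), the induced-subgraph verification — in particular ruling out the graphs $H_1,\dots,H_k$ as induced subgraphs of the subdivided wall, since $H_i$ grows with $i$ and one must argue uniformly that a long-subdivided wall contains no $H_i$ for $i\le k$. The right lemma to isolate is: in $W_{n,k}$, for any two branch vertices (degree-$3$ vertices) $u,v$, the unique induced path between them avoiding other branch vertices has length a multiple of $k$ and in particular $\ge k$; and any induced subgraph with a vertex of degree $\ge 3$ and small diameter must lie within distance $<k$ of a single branch vertex, hence be a subgraph of a subdivided-star neighborhood, which contains no $H_i$ and no $K_{1,4}$ (degree $\le 3$). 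The potential subtlety in the ${\cal T}_k$ part is making the "substitute a bounded clique for each vertex" argument precise when the substituted cliques have size exactly $2$ or $3$; here I would lean on the fact, used already in the proof of Proposition~\ref{lem:power-bdd-prime}, that substituting cliques into vertices does not increase the clique-width of the power beyond that of the power of the quotient, so unboundedness transfers downward from $L(W_{n,k})$ only after first establishing it upward, which is why I prefer to argue the line-graph family directly contains long subdivided walls (or their small perturbations) as induced subgraphs and apply Proposition~\ref{prop:induced}.
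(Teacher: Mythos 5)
Your core mechanism --- bounded maximum degree plus unbounded treewidth, fed into Proposition~\ref{prop:bdd-deg} --- is exactly the paper's, but both halves of your execution have gaps. For ${\cal S}_k$, your witness family is off by one: in a wall whose edges are replaced by paths of length exactly $k$, two adjacent branch vertices $u,v$ are joined by an induced path of length $k$, and this path together with the two other neighbours of $u$ and the two other neighbours of $v$ induces a copy of $H_k$ (the forbidden tree with two degree-$3$ vertices at distance $k$); so $W_{n,k}\notin{\cal S}_k$. You would need subdivision paths of length at least $k+1$, and the induced-subgraph verification you flag as the "main obstacle" must be redone accordingly. The paper sidesteps this entire construction by citing the known result of Lozin and Rautenbach~\cite{MR2195310} that ${\cal S}_k$ itself is of unbounded treewidth, and by observing that $\{C_3,K_{1,4}\}$-freeness already forces maximum degree at most $3$ for every graph in ${\cal S}_k$, so Proposition~\ref{prop:bdd-deg} applies directly to the whole class.

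The ${\cal T}_k$ half is the more serious problem: neither of your proposed routes works as stated. The line graph of a subcubic graph is claw-free (each degree-$3$ vertex lies in a triangle arising from a branch vertex, so its neighbourhood contains an edge), whereas a subdivided wall contains claws at its branch vertices; hence $L(W_{n,k})$ does not contain a subdivided wall as an induced subgraph and Proposition~\ref{prop:induced} cannot be invoked that way. Your alternative, viewing $L(W_{n,k})$ as obtained from $W_{n,k}$ by substituting bounded cliques for vertices, is also incorrect: clique substitution joins the cliques of adjacent vertices completely, while the line graph joins only one vertex of each clique, so the triangles are not modules and the quotient-graph reasoning behind Proposition~\ref{lem:power-bdd-prime} does not apply. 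The paper's route is to import the Gurski--Wanke inequality $(\tw(G)+1)/4\le\cw(L(G))\le 2\tw(G)+2$, which converts unbounded treewidth of ${\cal S}_k$ into unbounded clique-width of ${\cal T}_k$; since line graphs of subcubic graphs have maximum degree at most $5$, Proposition~\ref{prop:bdd-deg} then finishes. Without some such external lower bound on $\cw(L(G))$ in terms of a width parameter of $G$, your argument for ${\cal T}_k$ does not close.
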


\begin{proof}
The class ${\cal S}_k$ is of unbounded clique-width and of unbounded treewidth~\cite{MR2195310}.
Since graphs in ${\cal S}_k$ are $\{C_3, K_{1,4}\}$-free, every graph in ${\cal S}_k$ is of maximum degree at most $3$. Hence,
Proposition~\ref{prop:bdd-deg} applies, showing that ${\cal S}_k$ is of power-unbounded clique-width.

Gurski and Wanke~\cite{MR2362959} showed that for every graph $G$ and its
line graph $L(G)$, we have $(\tw(G)+1)/4\le \cw(L(G))\le
2\tw(G)+2$. This implies that ${\cal T}_k$ is of unbounded
clique-width. Since every graph in ${\cal S}_k$ is of maximum
degree at most $3$, every graph in ${\cal T}_k$ is of maximum
degree at most $5$. Hence, Proposition~\ref{prop:bdd-deg} implies
that the class ${\cal T}_k$ is of power-unbounded clique-width.
\end{proof}

To extend Proposition~\ref{prop:Sk} {to arbitrary hereditary graph classes}, let us recall the following two parameters,
introduced in~\cite{MR3028197}:
\begin{itemize}
  \item $\kappa(G)$ is the maximum $k$ such that $G\in {\cal S}_k$.
If $G$ belongs to no class ${\cal S}_k$, we define $\kappa(G)$ to
be $0$, and if $G$ belongs to all classes ${\cal S}_k$, then
$\kappa(G)$ is defined to be $\infty$. Also, for a set of graphs
${\cal G}$, we define $\kappa({\cal G}) = \sup\{\kappa(G)\mid G\in
{\cal G}\}$.
  \item $\lambda(G)$ is the maximum $\ell$ such that $G\in {\cal T}_\ell$.
If $G$ belongs to no {class} ${\cal T}_\ell$, then
$\lambda(G):=0$, and if $G$ belongs to every ${\cal T}_\ell$, then
$\lambda(G):=\infty$. For a set of graphs ${\cal G}$, we define
$\lambda({\cal G}) = \sup\{\lambda(G)\mid G\in {\cal G}\}$.
\end{itemize}

According to the definition, in order for $\kappa(G)$ to be infinite, $G$ must belong to every class ${\cal S}_k$,
that is, $G\in {\cal S}$. Moreover, \hbox{$\lambda(G)=\infty$} if and only if $G$ is the
line graph of a graph in $\cal S$. Let us denote the class of all
such graphs by ${\cal T}$. In other words, $\cal T$ is the class
of graphs every connected component of which has the form
$T_{i,j,k}$ represented on the right in Fig.~\ref{fig:ST} (where the values of $i,j,k\ge 0$ may depend on component).

The following result is implicit in the proofs of Theorems~2 and~{6} in~\cite{MR3028197}.

\begin{lemma}\label{lem:kappa-lambda}
Let ${\cal F}$ be a set of graphs. If $\kappa({\cal F})< \infty$,
then there is an integer $k$ such that ${\cal S}_k \subseteq {\it
Free}({\cal F})$. If $\lambda({\cal F})< \infty$, then there is an
integer $k$ such that ${\cal T}_k \subseteq {\it Free}({\cal F})$.
\end{lemma}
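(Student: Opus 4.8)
The plan is to prove both implications by contraposition. The statement about $\kappa$ is essentially immediate from the fact that each ${\cal S}_k$ is a hereditary class, and the statement about $\lambda$ reduces to it once we show that, up to a bounded loss in the index, the classes ${\cal S}_k$ are closed not only under induced subgraphs but under arbitrary subgraphs.

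First I would handle $\kappa$. Assume, for contradiction, that ${\cal S}_k \not\subseteq {\it Free}({\cal F})$ for every $k\ge 3$. Then for each such $k$ there are $G_k\in {\cal S}_k$ and $F_k\in {\cal F}$ with $F_k$ an induced subgraph of $G_k$. Since ${\cal S}_k = {\it Free}(\{K_{1,4},C_3,\ldots,C_k,H_1,\ldots,H_k\})$ is hereditary, $F_k\in {\cal S}_k$, hence $\kappa(F_k)\ge k$, and therefore $\kappa({\cal F})\ge k$. As $k$ was arbitrary, $\kappa({\cal F})=\infty$, contradicting $\kappa({\cal F})<\infty$; so some $k$ with ${\cal S}_k\subseteq {\it Free}({\cal F})$ exists.

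Next I would isolate the key technical claim: \emph{there is a constant $c$, depending only on the family $(H_i)_{i\ge 1}$, such that for every $k\ge c+3$, every subgraph of a graph in ${\cal S}_k$ belongs to ${\cal S}_{k-c}$.} Choose $c$ large enough that $|V(H_i)|\le i+c$ for all $i\ge 1$ (this holds since $H_i$ is obtained from a fixed graph by subdividing an edge; see Fig.~\ref{fig:H_i}). Let $G\in {\cal S}_k$ and let $M$ be a subgraph of $G$. By the proof of Proposition~\ref{prop:Sk}, $G$ is subcubic, hence so is $M$, so $M$ is $K_{1,4}$-free; moreover every cycle of $M$ is a cycle of $G$, so $M$ has girth greater than $k$ and is $\{C_3,\ldots,C_{k-c}\}$-free. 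Suppose $M$ contained an induced copy of $H_i$ with $1\le i\le k-c$ on a vertex set $U$. Then $|U|=|V(H_i)|\le i+c\le k$, so $G[U]$ has no cycle (any such cycle would have length at most $|U|\le k$, whereas the girth of $G$ exceeds $k$); being an acyclic graph containing the spanning tree $M[U]=H_i$ on $U$, it must equal $H_i$, contradicting $G\in {\cal S}_k$. Hence $M$ is $\{H_1,\ldots,H_{k-c}\}$-free and $M\in {\cal S}_{k-c}$, proving the claim.

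Finally, granting the claim, I would argue by contraposition for $\lambda$. Suppose ${\cal T}_k\not\subseteq {\it Free}({\cal F})$ for every $k$. For each $k\ge c+3$ pick $G_k\in {\cal T}_k$ and $F_k\in {\cal F}$ with $F_k$ an induced subgraph of $G_k$, and write $G_k=L(G_k')$ with $G_k'\in {\cal S}_k$. Since an induced subgraph of a line graph is the line graph of a subgraph of the source graph, $F_k=L(M_k)$, where $M_k$ is the subgraph of $G_k'$ whose edge set is the set of vertices of $F_k$ (viewed as edges of $G_k'$); by the claim $M_k\in {\cal S}_{k-c}$, so $F_k\in {\cal T}_{k-c}$ and $\lambda(F_k)\ge k-c$. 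Thus $\lambda({\cal F})\ge k-c$ for all $k\ge c+3$, i.e.\ $\lambda({\cal F})=\infty$, a contradiction; hence ${\cal T}_k\subseteq {\it Free}({\cal F})$ for some $k$. The main obstacle is exactly that, unlike ${\cal S}_k$, the class ${\cal T}_k$ need not be hereditary — an induced subgraph of a line graph corresponds only to an arbitrary (non-induced) subgraph of the source graph — and the technical claim is the workaround: deleting edges from a graph in ${\cal S}_k$ can neither create short cycles nor raise degrees, and can create a new induced $H_i$ only for $i$ within a constant of $k$, so the source graph stays in ${\cal S}_{k-c}$, which suffices since $k-c\to\infty$.
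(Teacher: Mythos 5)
Your proof is correct. Note that the paper itself gives no proof of this lemma: it only records it as ``implicit in the proofs of Theorems~2 and~6 in~\cite{MR3028197}'', so there is no in-paper argument to compare against; what you have written is essentially the standard argument from that reference. The first half is indeed immediate from the fact that each ${\cal S}_k$ is hereditary and the classes are nested, and you correctly identify the only real issue in the second half, namely that ${\cal T}_k$ is not hereditary because an induced subgraph of $L(G)$ is $L(M)$ for an arbitrary (not necessarily induced) subgraph $M$ of $G$. Your technical claim handles this correctly: deleting edges preserves $K_{1,4}$-freeness and girth, and a new induced $H_i$ in $M$ on a vertex set $U$ forces $G[U]$ to be acyclic (hence equal to $H_i$) as soon as $|V(H_i)|=i+5\le k$, so taking $c=5$ gives $M\in{\cal S}_{k-c}$ and $\lambda(F_k)\ge k-c\to\infty$, completing the contrapositive.
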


Proposition~\ref{prop:Sk} and Lemma~\ref{lem:kappa-lambda} imply the following.

\begin{theorem}\label{thm:kappa-lambda}
Let ${\cal F}$ be a set of graphs. If $\kappa({\cal F})< \infty$
or $\lambda({\cal F})<\infty$, then the class of ${\cal F}$-free
graphs is of power-unbounded clique-width.
\end{theorem}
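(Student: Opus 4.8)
The plan is to derive Theorem~\ref{thm:kappa-lambda} directly from Proposition~\ref{prop:Sk} and Lemma~\ref{lem:kappa-lambda}, together with the monotonicity remark that a subclass of a graph class of power-unbounded clique-width is itself of power-unbounded clique-width (equivalently, $\pi(\cdot)$ is monotone under inclusion). Suppose ${\cal F}$ is a set of graphs with $\kappa({\cal F}) < \infty$. By the first part of Lemma~\ref{lem:kappa-lambda}, there is an integer $k \ge 3$ such that ${\cal S}_k \subseteq {\it Free}({\cal F})$. By Proposition~\ref{prop:Sk}, ${\cal S}_k$ is of power-unbounded clique-width. Since ${\cal S}_k$ is contained in the class of ${\cal F}$-free graphs, and since no larger class can have finite $\pi$ when it contains a subclass of infinite $\pi$, the class ${\it Free}({\cal F})$ is of power-unbounded clique-width as well.

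The case $\lambda({\cal F}) < \infty$ is handled symmetrically: the second part of Lemma~\ref{lem:kappa-lambda} yields an integer $k \ge 3$ with ${\cal T}_k \subseteq {\it Free}({\cal F})$, Proposition~\ref{prop:Sk} tells us ${\cal T}_k$ is of power-unbounded clique-width, and monotonicity finishes the argument. One small point worth spelling out: the statement ${\cal S}_k \subseteq {\it Free}({\cal F})$ produced by Lemma~\ref{lem:kappa-lambda} for some specific $k$ is exactly what is needed, because the conclusion of the theorem is only that the whole class ${\it Free}({\cal F})$ is power-unbounded, not that every ${\cal S}_k$ is contained in it.

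There is essentially no obstacle here; the theorem is a straightforward corollary of the two stated results, and the only thing to verify is that the quoted Lemma~\ref{lem:kappa-lambda} is being applied with the correct hypothesis and the correct direction of the inclusion. The genuine content lies in Proposition~\ref{prop:Sk} (which in turn rests on the unboundedness of treewidth of ${\cal S}_k$ from the literature, the Gurski--Wanke bounds relating clique-width of a line graph to treewidth, and Proposition~\ref{prop:bdd-deg} on bounded-degree classes) and in Lemma~\ref{lem:kappa-lambda} (which is credited to the proofs in~\cite{MR3028197}). Since both are available to us, the proof of Theorem~\ref{thm:kappa-lambda} is just the two-line combination described above.
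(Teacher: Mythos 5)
Your proof is correct and matches the paper exactly: the paper states Theorem~\ref{thm:kappa-lambda} as an immediate consequence of Proposition~\ref{prop:Sk} and Lemma~\ref{lem:kappa-lambda}, and your write-up simply spells out that combination together with the (correct) monotonicity observation that a class containing a power-unbounded subclass is itself power-unbounded. No gaps.
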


\subsection{Graph classes defined by two connected forbidden induced subgraphs}\label{subsec:hereditary}

In this section, we prove our main result: a complete characterization of
graph classes of power-bounded clique-width within hereditary graph classes
defined by two connected forbidden induced subgraphs.

\begin{theorem}\label{thm:bigenic classes-characterization}
Let $A$ and $B$ be two connected graphs, and let ${\cal G}$ be the
class of $\{A,B\}$-free graphs. Then ${\cal G}$ is of
power-bounded clique-width if and only if either one of $A$ and
$B$ is a path, or one of $A$ and $B$ is isomorphic to some
$S_{1,j,k}$, and the other one to some $T_{1,j',k'}$ (represented in
Fig.~\ref{fig:ST}).
\end{theorem}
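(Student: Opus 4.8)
The plan is to prove the characterization via a two-directional argument, handling the "hard" (sufficiency) and "easy" (necessity) directions separately, with the necessity direction relying heavily on the machinery of $\kappa$ and $\lambda$ developed in Section~\ref{subsec:suff-pw-unbdd}.

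\textbf{Necessity.} Suppose ${\cal G} = {\it Free}(\{A,B\})$ is of power-bounded clique-width, and suppose for contradiction that the stated conditions fail: neither $A$ nor $B$ is a path, and it is not the case that one of them is some $S_{1,j,k}$ and the other some $T_{1,j',k'}$. First I would observe that if $A$ (say) has a cycle, then by Theorem~\ref{thm:monogenic} the class of $A$-free graphs is already of power-unbounded clique-width, and hence so is ${\cal G}$ unless $B$ is a path (since ${\cal G} \subseteq {\it Free}(\{A\})$ would then force power-unboundedness). Similarly, if $A$ has a vertex of degree at least $4$, then $A$ contains $K_{1,4}$ as an induced subgraph — wait, it contains $K_{1,4}$ as a subgraph but we need an induced subgraph; more carefully, if $\Delta(A) \ge 4$ then $A$ contains $K_{1,4}$ as an induced subgraph (the four neighbors need not be nonadjacent, but one can argue via the structure — actually the cleanest route is via $\kappa$). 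So I would instead directly estimate $\kappa(A)$ and $\lambda(A)$ (and the same for $B$). By Theorem~\ref{thm:kappa-lambda}, if $\kappa(\{A,B\}) < \infty$ or $\lambda(\{A,B\}) < \infty$, then ${\cal G}$ is of power-unbounded clique-width, a contradiction. So we may assume $\kappa(\{A,B\}) = \infty$ and $\lambda(\{A,B\}) = \infty$. Now $\kappa(\{A,B\}) = \infty$ means at least one of $A, B$ lies in ${\cal S}$, i.e., is some $S_{i,j,k}$; similarly $\lambda(\{A,B\}) = \infty$ means at least one of them is a line graph of a graph in ${\cal S}$, i.e., some $T_{i,j,k}$. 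Since no $S_{i,j,k}$ is a $T_{i',j',k'}$ in general (they can coincide only in degenerate cases which are paths), and we are assuming neither is a path, one of $A, B$ is $S_{i,j,k}$ and the other is $T_{i',j',k'}$ for some parameters. It then remains to pin down $i = i' = 1$: if $i \ge 2$, then $S_{i,j,k}$ contains a vertex of degree $\ge 3$ with three long "legs," and ${\it Free}(\{S_{i,j,k}\})$ still contains a minimal class of unbounded clique-width of bounded degree — more precisely, I would argue that ${\it Free}(\{S_{i,j,k}, T_{i',j',k'}\})$ contains all of ${\cal S}_m$ for suitable large $m$ when $i \ge 2$, hence is power-unbounded by Proposition~\ref{prop:Sk}. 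The analogous bound $i' = 1$ for the $T$ side follows symmetrically (or by the same $\lambda$-argument applied more carefully).

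\textbf{Sufficiency.} There are two cases. If one of $A$, $B$ is a path $P_t$, then ${\cal G} \subseteq {\it Free}(\{P_t\})$, but that alone is not enough — ${\it Free}(\{P_t\})$ is not of power-bounded clique-width in general. Here I would use the second forbidden subgraph: $P_t$-free graphs of bounded diameter... no. Rather, the key structural fact is that a connected $\{A, P_t\}$-free graph has bounded diameter when $B = P_t$? That is false too. The correct approach: in a connected $P_t$-free graph the diameter is at most $t-2$. Wait — that IS true: a connected graph with an induced path on $t$ vertices... no, a connected graph of diameter $\ge t-1$ contains $P_t$ as an induced subgraph (take a shortest path realizing the diameter). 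So a connected $P_t$-free graph has diameter at most $t-2$, hence ${\it Free}(\{P_t\})$ restricted to connected graphs is of bounded diameter, and by Proposition~\ref{prop:pw-bdd-cwd-powers} (and Proposition~\ref{lem:power-bdd-prime} to reduce to connected/prime graphs, or just the disconnected-components observation) ${\it Free}(\{P_t\})$ is of power-bounded clique-width. So the "$P_t$" case is immediate and does not even use $B$. The second case is the genuinely interesting one: $A = S_{1,j,k}$ and $B = T_{1,j',k'}$. Here I would show that every connected $\{S_{1,j,k}, T_{1,j',k'}\}$-free graph is obtained from a graph of bounded diameter by subdividing a bounded number of edges, and then invoke Theorem~\ref{thm:sufficient-condition}. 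Concretely: forbidding $S_{1,j,k}$ (a "spider" with three legs, one of length $1$) severely restricts long induced paths with a side branch, and forbidding $T_{1,j',k'}$ (the line graph version — a triangle with three legs) restricts the dense analogue; together they should force that any connected graph in the class, after removing a bounded number of "long pendant/internal 2-paths," has bounded diameter. The bounds on $j, k, j', k'$ determine the constants. This reduces exactly to the hypothesis of Theorem~\ref{thm:sufficient-condition} (or Lemma~\ref{lem:sufficient}), giving power-bounded clique-width.

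\textbf{Main obstacle.} The hard part is the structural analysis in the sufficiency direction for the case $A = S_{1,j,k}$, $B = T_{1,j',k'}$: proving that $\{S_{1,j,k}, T_{1,j',k'}\}$-free connected graphs decompose as "bounded-diameter graph plus boundedly many subdivided edges." This requires a careful case analysis of how long induced 2-paths can attach to a "core," using both forbidden subgraphs to rule out branching (via $S_{1,j,k}$) and to control dense neighborhoods around such paths (via $T_{1,j',k'}$ and its triangle structure), and to bound the number of such paths by a function of the parameters. The $T$-graph is needed precisely because $S_{1,j,k}$-free alone permits, e.g., graphs with large cliques having many pendant paths, which are of unbounded clique-width (as the $K_n^\ast$ example after Theorem~\ref{thm:sufficient-condition} shows). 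I would expect the bookkeeping — tracking which configurations force an induced $S_{1,j,k}$ or $T_{1,j',k'}$ — to be the bulk of the work, while the reductions to earlier results (Theorems~\ref{thm:sufficient-condition} and~\ref{thm:kappa-lambda}, Proposition~\ref{lem:power-bdd-prime}) are routine.
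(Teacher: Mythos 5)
Your overall architecture (necessity via $\kappa$/$\lambda$ and Theorem~\ref{thm:kappa-lambda}, sufficiency via a structural decomposition feeding into Theorem~\ref{thm:sufficient-condition}) matches the paper's, but two of your key steps do not work as stated.

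First, in the necessity direction, your argument for pinning down the first index to $1$ is wrong. You propose to show that for $i\ge 2$ the class ${\it Free}(\{S_{i,j,k},T_{i',j',k'}\})$ contains ${\cal S}_m$ for large $m$ and then invoke Proposition~\ref{prop:Sk}. But ${\cal S}_m={\it Free}(\{K_{1,4},C_3,\dots,C_m,H_1,\dots,H_m\})$ contains the graph $S_{i,j,k}$ itself (it is a tree with a single degree-$3$ vertex, hence $K_{1,4}$-, $C_\ell$- and $H_\ell$-free), so ${\cal S}_m\not\subseteq {\it Free}(\{S_{i,j,k},\dots\})$ for any $m$. The paper instead observes that if $A$ contains an induced $S_{2,2,2}$ then ${\it Free}(\{A,B\})\supseteq {\it Free}(\{S_{2,2,2},T_{1,1,1}\})$, which contains all bipartite permutation graphs, and these are of power-unbounded clique-width by Proposition~\ref{prop:path-powers}; symmetrically, $T_{2,2,2}$ is excluded via unit interval graphs. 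Your proposal never brings in these two classes for this step, and without them (or some substitute) the reduction to $S_{1,j,k}$ and $T_{1,j',k'}$ has no proof.

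Second, in the sufficiency direction, the structural statement you plan to prove --- ``every \emph{connected} $\{S_{1,j,k},T_{1,j',k'}\}$-free graph is obtained from a graph of bounded diameter by subdividing a bounded number of edges'' --- is false. Take a long path and replace each vertex by a pair of non-adjacent false twins: the result is connected, triangle-free (hence $T_{1,k,k}$-free), $S_{1,k,k}$-free, of unbounded diameter, and every internal vertex has degree $4$, so it is not a subdivision of anything of bounded diameter. The correct statement (the paper's Lemma~\ref{lem:bigenic classes-3}) is restricted to \emph{prime} graphs, and primality is used essentially inside the structural proof (to produce a distinguishing vertex for a nontrivial module of common neighbors), not merely as a routine wrapper via Proposition~\ref{lem:power-bdd-prime} at the end. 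Beyond this, the structural lemma itself --- which you correctly identify as the bulk of the work --- is only sketched at the level of intent; the paper's proof requires a nontrivial case analysis (controlling the neighborhoods of vertices near the middle of long shortest paths, showing such vertices have degree $2$, and splitting on whether two degree-$\ge 3$ vertices can be far apart) that your proposal does not supply.
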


{We remark that Theorem~\ref{thm:bigenic classes-characterization}
implies the existence of classes of $\{A,B\}$-free graphs of power-bounded clique-width that are of
unbounded diameter and of unbounded clique-width.
An example of such a class is given by the class of} $\{$claw, bull$\}$-free graphs
(where the claw is the graph $S_{1,1,1}$, and the bull is the graph $T_{1,2,2}$).\footnote{The fact that
the class of claw-free bull-free graphs is of unbounded clique-width follows from the fact that it contains all
complements of triangle-free graphs (in particular, all complements of grids), hence
Proposition~\ref{prop:complementation} applies.}

Theorem~\ref{thm:bigenic classes-characterization} will be derived from the following two lemmas.
{It might be useful at this point to remind the reader of the simple
observation that ${\cal S}\cap {\cal T}$ equals the set of all disjoint unions of paths.}

\begin{lemma}\label{lem:bigenic classes-1}
Let $A$ and $B$ be two graphs, and let ${\cal G}$ be the class of
$\{A,B\}$-free graphs. Then, the following holds:
\begin{enumerate}[(i)]
  \item\label{item1} If $\{A,B\}\cap{\cal S} = \emptyset$ or $\{A,B\}\cap {\cal T} = \emptyset$, then
  ${\cal G}$ is of power-unbounded clique-width.
  \item\label{item3}  If $A\in {\cal S}\setminus {\cal T}$, $B\in {\cal T}\setminus {\cal S}$ and $A$ contains an induced $S_{2,2,2}$, then
  ${\cal G}$ is of power-unbounded clique-width.
  \item\label{item4}  If $A\in {\cal S}\setminus {\cal T}$, $B\in {\cal T}\setminus {\cal S}$ and $B$ contains an induced $T_{2,2,2}$, then
  ${\cal G}$ is of power-unbounded clique-width.
\end{enumerate}
\end{lemma}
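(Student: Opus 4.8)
\medskip

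The plan is to reduce each part to Theorem~\ref{thm:kappa-lambda}, i.e.\ to show that under the stated hypotheses the pair $\{A,B\}$ has either $\kappa(\{A,B\})<\infty$ or $\lambda(\{A,B\})<\infty$. For part~\eqref{item1}, suppose $\{A,B\}\cap{\cal S}=\emptyset$. Then neither $A$ nor $B$ lies in ${\cal S}=\bigcap_{k\ge 3}{\cal S}_k$, so for each of $A$ and $B$ there is a finite bound on the $k$ for which it lies in ${\cal S}_k$; taking the maximum over the two graphs gives $\kappa(\{A,B\})<\infty$, and Theorem~\ref{thm:kappa-lambda} applies. The case $\{A,B\}\cap{\cal T}=\emptyset$ is symmetric, using $\lambda$ in place of $\kappa$. (One should double-check the degenerate case where $A$ or $B$ is itself not of the relevant syntactic shape at all, so that $\kappa$ or $\lambda$ is already $0$; this only makes the bound smaller.)

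\medskip

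For part~\eqref{item3}, the hypotheses are $A\in{\cal S}\setminus{\cal T}$, $B\in{\cal T}\setminus{\cal S}$, and $A$ contains an induced $S_{2,2,2}$. The key point is that a graph $G$ in $\bigcap_{k}{\cal S}_k$ — i.e.\ $G\in{\cal S}$ — all of whose components are of the form $S_{i,j,k}$, is $A$-free as soon as $A$ contains an induced $S_{2,2,2}$, provided $G$ has bounded ``arm lengths'', because the subdivided-claw $S_{i,j,k}$ contains an induced $S_{2,2,2}$ only when all three of $i,j,k\ge 2$. More precisely, I would identify a subfamily of ${\cal S}$ (say the graphs whose components are paths or are of the form $S_{1,j,k}$ with one short arm) which is still of power-unbounded clique-width but which is $A$-free: since $A\in{\cal S}\setminus{\cal T}$, $A$ has a vertex of degree $3$, so $A$ is some $S_{i,j,k}$ with $i,j,k\ge 1$, and ``$A$ contains an induced $S_{2,2,2}$'' forces $i,j,k\ge 2$; hence any $S_{1,j',k'}$ is $A$-free, and also $B$-free because $B\in{\cal T}\setminus{\cal S}$ is not a disjoint union of paths while, after restricting to a subclass if needed, these graphs can be taken to avoid $B$. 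Concretely I expect the right move is to invoke Lemma~\ref{lem:kappa-lambda}: it suffices to check that $\kappa(\{A,B\})<\infty$. Indeed $B\notin{\cal S}$ gives $\kappa(B)<\infty$, and $A$ containing an induced $S_{2,2,2}$ shows $A\notin{\cal S}_k$ for $k$ large (an $S_{2,2,2}$ is, e.g., isomorphic to... — here one checks $S_{2,2,2}$ equals some forbidden $H_i$ or contains a forbidden configuration once $k$ is large enough), so $\kappa(A)<\infty$ as well, whence $\kappa(\{A,B\})<\infty$ and Theorem~\ref{thm:kappa-lambda} finishes it. Part~\eqref{item4} is the line-graph mirror image: $A\notin{\cal T}$ gives $\lambda(A)<\infty$, and $B$ containing an induced $T_{2,2,2}$ should give $\lambda(B)<\infty$, so $\lambda(\{A,B\})<\infty$ and Theorem~\ref{thm:kappa-lambda} again applies.

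\medskip

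The main obstacle I anticipate is the bookkeeping in the two ``contains an induced $S_{2,2,2}$'' (resp.\ $T_{2,2,2}$) steps: one must verify precisely why a graph $A\in{\cal S}$ that contains an induced $S_{2,2,2}$ cannot lie in ${\cal S}_k$ for all $k$, i.e.\ pin down which of the forbidden graphs $K_{1,4},C_3,\dots,C_k,H_1,\dots,H_k$ is created, and symmetrically for $T$, $\lambda$, and the line-graph side. This is where I would be most careful, since $S_{2,2,2}$ is triangle- and $C_{\ge 4}$-free and of maximum degree $3$, so the relevant obstruction must be one of the $H_i$'s (or an $H_i$ appearing in $A$ itself as $A$'s arms grow), and establishing that cleanly is the crux. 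Everything else is a direct appeal to the already-established Theorem~\ref{thm:kappa-lambda} and Lemma~\ref{lem:kappa-lambda}, together with the observation recalled just before the lemma that ${\cal S}\cap{\cal T}$ is exactly the disjoint unions of paths.
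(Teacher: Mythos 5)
Your argument for part~\eqref{item1} is exactly the paper's: $\{A,B\}\cap{\cal S}=\emptyset$ gives $\kappa(\{A,B\})<\infty$, the ${\cal T}$ case gives $\lambda(\{A,B\})<\infty$, and Theorem~\ref{thm:kappa-lambda} finishes. That part is fine.

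For parts~\eqref{item3} and~\eqref{item4}, however, your plan cannot work. You propose to show $\kappa(\{A,B\})<\infty$ (resp.\ $\lambda(\{A,B\})<\infty$) and again invoke Theorem~\ref{thm:kappa-lambda}, with the key step being that ``$A$ contains an induced $S_{2,2,2}$ shows $A\notin{\cal S}_k$ for $k$ large.'' This is false, and in fact contradicts the hypothesis: part~\eqref{item3} assumes $A\in{\cal S}=\bigcap_{k\ge3}{\cal S}_k$, which by definition means $\kappa(A)=\infty$, hence $\kappa(\{A,B\})=\infty$. Concretely, $S_{2,2,2}$ itself lies in every ${\cal S}_k$: it is acyclic, has no $K_{1,4}$, and contains no $H_i$ (each $H_i$ has two vertices of degree~$3$, while $S_{2,2,2}$ has only one), so no forbidden configuration is ever ``created.'' The symmetric claim $\lambda(B)<\infty$ in part~\eqref{item4} fails for the same reason, since $B\in{\cal T}$ means $\lambda(B)=\infty$. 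Your alternative sketch --- finding a power-unbounded subfamily of ${\cal S}$ consisting of graphs $S_{1,j,k}$ --- also fails: every graph in ${\cal S}$ is a disjoint union of subdivided claws, hence a forest of bounded degree, so any subfamily of ${\cal S}$ has bounded clique-width and is certainly not power-unbounded. The missing idea is to exhibit a genuinely different power-unbounded class inside ${\it Free}(\{A,B\})$. The paper does this as follows: in case~\eqref{item3}, $A$ contains an induced $S_{2,2,2}$ and $B$ (being in ${\cal T}\setminus{\cal S}$, hence not a union of paths) contains a triangle $T_{1,1,1}$, so ${\it Free}(\{A,B\})\supseteq{\it Free}(\{S_{2,2,2},T_{1,1,1}\})$, which contains all bipartite permutation graphs; in case~\eqref{item4}, $A$ contains a claw $S_{1,1,1}$ and $B$ contains $T_{2,2,2}$, so ${\it Free}(\{A,B\})\supseteq{\it Free}(\{S_{1,1,1},T_{2,2,2}\})$, which contains all unit interval graphs. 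Both of these classes are of power-unbounded clique-width by Proposition~\ref{prop:path-powers}, which is the ingredient your outline does not reach.
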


\begin{proof}
\emph{\eqref{item1}} Suppose that $\{A,B\}\cap{\cal S} =
\emptyset$ or $\{A,B\}\cap {\cal T} = \emptyset$. Then
$\kappa(\{A,B\})<\infty$ or $\lambda(\{A,B\}) < \infty$, and by
Theorem~\ref{thm:kappa-lambda}, ${\cal G}$ is of power-unbounded
clique-width.

\emph{\eqref{item3}}
{If $A\in {\cal S}\setminus {\cal T}$, $B\in {\cal T}\setminus {\cal S}$ and $A$ contains an induced $S_{2,2,2}$, then
the class of $\{A,B\}$-free graphs contains the class of $\{S_{2,2,2}, T_{1,1,1}\}$-free graphs, which in turn
contains the class of bipartite permutation graphs (see, e.g.,~\cite{MR0221974,MR1861357,MR2071482}).} By
Proposition~\ref{prop:path-powers}, ${\cal G}$ is of
power-unbounded clique-width.

\emph{\eqref{item4}}
{If $A\in {\cal S}\setminus {\cal T}$, $B\in {\cal T}\setminus {\cal S}$ and $B$ contains an induced $T_{2,2,2}$, then
the class of $\{A,B\}$-free graphs contains the class of $\{S_{1,1,1}, T_{2,2,2}\}$-free graphs, which in turn
contains the class of unit interval graph graphs~\cite{MR0252267}.}
By Proposition~\ref{prop:path-powers}, ${\cal G}$ is of
power-unbounded clique-width.
\end{proof}

\begin{lemma}\label{lem:bigenic classes-3}
{For $k \ge 3$, let $G$ be a prime $\{S_{1,k,k}, T_{1,k,k}\}$-free graph.
Then, $G$ is obtained by subdividing a single edge in a graph of
bounded diameter.}
\end{lemma}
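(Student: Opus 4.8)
The plan is: if $\diam(G)$ is below a constant $c_0(k)$ there is nothing to prove (take $H=G$ and subdivide any edge by a path of length $1$), so fix a shortest path $P=p_0p_1\cdots p_D$ with $D=\diam(G)>c_0(k)$ and show that the ``deep middle'' of $P$ is a long $2$-path whose deletion leaves a graph of bounded diameter. Since $P$ is a geodesic, every vertex $w\notin V(P)$ with a neighbour on $P$ satisfies $N(w)\cap V(P)\subseteq\{p_{i-1},p_i,p_{i+1}\}$ for some $i$. The main claim is that $p_i$ has degree $2$ whenever $p_i$ is at distance more than $\beta(k)$ from both ends of $P$, for a suitable $\beta(k)$. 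If some $w\notin V(P)$ had $N(w)\cap V(P)=\{p_i\}$ with $i$ in the deep range, then $\{w\}\cup\{p_{i-k},\dots,p_{i+k}\}$ would induce $S_{1,k,k}$; if $N(w)\cap V(P)=\{p_i,p_{i+1}\}$, then $\{w\}\cup\{p_{i-k+1},\dots,p_{i+k}\}$ would induce $T_{1,k,k}$; and symmetrically for $\{p_{i-1},p_i\}$. Hence the only remaining possibility is that $p_i$ has a neighbour $w\notin V(P)$ with $N(w)\cap V(P)=\{p_{i-1},p_i,p_{i+1}\}$.

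To rule out this last case one invokes primality. If it occurred, $\{p_i,w\}$ would be a module (a pair of adjacent twins) unless some vertex $y$ distinguishes them; such a $y$ lies outside $V(P)$, is adjacent to exactly one of $p_i$ and $w$, and using the two subpaths of $P$ leaving $p_i$ as two legs of length $k$ of a subdivided claw centred at $p_i$ or $w$ (with $p_i$ itself removed from the picture), with $y$ — or, if $y$ is adjacent to a leg, a triangle through $y$ — playing the role of the pendant (resp.\ the third vertex of a subdivided triangle), one obtains an induced $S_{1,k,k}$ or $T_{1,k,k}$ unless $y$ is again adjacent to $P$ within a bounded window of indices around $i$. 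Iterating, the whole connected ``blob'' of vertices attached near $p_i$ is confined to a bounded interval of indices; a pigeonhole argument (there are finitely many local ``attachment types'', so two consecutive units of a long blob would be twins, contradicting primality) then forces this blob to be of bounded size; and then its leftmost attachment index is a ``dirty'' index of the deep middle whose attached vertex has only one or two consecutive $P$-neighbours, which has already been excluded. This proves the claim, so the deep middle of $P$ carries no external attachments.

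Let $Q=q_1\cdots q_m$ be the maximal $2$-path of $G$ containing the deep middle of $P$. If $G$ is $2$-regular then $G$ is a cycle $C_n$, which is obtained from $C_3$ by subdividing one edge, and we are done; otherwise the end-vertices of $Q$ have neighbours $a,b\notin V(Q)$ of degree $\ge3$, and $a\ne b$ and $a\not\sim b$ (otherwise $a$, having degree $\ge3$, would sit on a long induced cycle through $Q$ together with a pendant vertex or a triangle, yielding $S_{1,k,k}$ or $T_{1,k,k}$). Deleting $V(Q)$ disconnects $G$ into at most two components, and $V(G)\setminus V(Q)$ meets $V(P)$ only in a bounded prefix and a bounded suffix. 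It remains to bound the diameters of these components: if one of them had diameter exceeding $c_0(k)$, it would contain a shortest path of $G$ of length $>c_0(k)$ avoiding $V(Q)$ (a detour through the long path $Q$ can never be shorter), whose deep middle, by the claim, would be a second $2$-path disjoint from $Q$. But a connected prime $\{S_{1,k,k},T_{1,k,k}\}$-free graph cannot contain two disjoint long $2$-paths: a shortest path joining them leaves an end-vertex $a$ of one of them with $\deg(a)\ge3$, and the leg of that $2$-path at $a$, a long leg towards the other $2$-path, and the third neighbour of $a$ force an induced $S_{1,k,k}$ or $T_{1,k,k}$ (after truncating the two long legs to length exactly $k$ and a short case analysis on the third neighbour). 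Hence every component of $G-V(Q)$ has diameter at most $c_0(k)$, so $H:=(G-V(Q))+ab$ has diameter at most $2c_0(k)+1$, and $G$ is obtained from $H$ by replacing the edge $ab$ with the path $a,q_1,\dots,q_m,b$.

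The hard part is the ``three consecutive attachments'' case together with its twin, the impossibility of two disjoint long $2$-paths: both configurations survive every naive forbidden-subgraph test and are eliminated only by combining the rigidity of the geodesic (attachments confined to three consecutive path vertices) with primality (which rules out the ``ladder-like'' homogeneous patterns that the induced-subgraph conditions alone would permit), via a localisation-and-pigeonhole argument of the kind used in clique-width lower bounds. Everything else is careful but routine bookkeeping with $S_{1,k,k}$ and $T_{1,k,k}$: always truncating the two long legs of a candidate forbidden subgraph to length exactly $k$, and keeping the relevant subpath of $P$ inside $V(P)$.
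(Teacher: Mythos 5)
Your overall strategy is the same as the paper's (the deep middle of a geodesic must consist of degree-$2$ vertices, with primality used to kill the attachment configurations that the forbidden subgraphs alone do not exclude, and the rest of the graph shown to have bounded diameter), but the decisive step is missing. First, your case analysis of $N(w)\cap V(P)$ is incomplete: since $P$ is a geodesic, the possibilities are a single vertex, two consecutive vertices, three consecutive vertices, \emph{or two vertices at distance two} ($\{p_{i-1},p_{i+1}\}$ without $p_i$). This last configuration is consistent with $P$ being shortest and creates no induced $S_{1,k,k}$ or $T_{1,k,k}$ locally (it is just a $C_4$ hanging on $P$), yet you never mention it. Second, and more seriously, the elimination of the surviving configurations via primality --- which you yourself identify as ``the hard part'' --- is only gestured at (``iterating\dots blob\dots pigeonhole\dots dirty index''); no actual argument is given, and it is exactly here that the lemma could fail. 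The paper's Claim~2 does this concretely: the off-path neighbour $x'$ of a deep vertex $x$ has two neighbours $y,z$ on $P$ at distance two; the set $A$ of common neighbours of $y$ and $z$ has at least two elements, so primality yields a vertex $w$ with a neighbour $a$ and a non-neighbour $b$ in $A$; one then reroutes the geodesic through $a$, applies the local-attachment claim to the new geodesic to pin down where $w$ attaches, and exhibits an explicit induced $S_{1,k_1,k_2}$ with $k_1,k_2\ge k$. Nothing in your sketch substitutes for this construction.

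A secondary gap: your bound on the diameter of the components of $G-V(Q)$ rests on the assertion that a prime $\{S_{1,k,k},T_{1,k,k}\}$-free graph cannot contain two disjoint long $2$-paths, justified by a one-line picture of a subdivided claw at an endpoint $a$. This does not obviously work when the path from $a$ towards the second $2$-path is short, or when the third neighbour of $a$ is adjacent to one of the legs; it needs the same kind of careful analysis. The paper avoids this by a different global case split (either two vertices of degree at least $3$ are far apart, in which case every vertex is close to one of them and the complement of their neighbourhoods is a single path, or all vertices of degree at least $3$ lie in a ball of bounded radius, in which case the exterior is shown to be a single path using Claim~2). As written, your proposal records the right intuition but leaves the core of the proof undone.
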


\begin{proof}
Let $G$ be a prime $\{S_{1,k,k}, T_{1,k,k}\}$-free graph.
In particular, $G$ is connected.

\medskip
\noindent {\it Claim 1. Let $u$ and $v$ be two vertices in $G$ with $\dist_G(u,v)\ge 2k$,
let $P$ be a shortest $u,v$-path in $G$, and let $Q$ be the subpath of $P$ induced by all vertices at distance at least
$k$ from each of the two endpoints of $P$.
Then, for every vertex $w\in N_G(Q)\setminus V(P)$, the neighborhood of $w$ in $P$ consists either of
three consecutive vertices, or of two vertices at distance two.}

\begin{proof}[Proof of claim]
Let  $w\in N_G(Q)\setminus V(P)$.
Due to the minimality of $P$, vertex $w$ cannot have two neighbors on $P$ at distance more than two.
Due to the $S_{1,k,k}$-freeness, vertex $w$ cannot have a single neighbor on $P$ (such a neighbor would belong to $Q$).
Due to the $T_{1,k,k}$-freeness, vertex $w$ cannot have only two consecutive neighbors on $P$.
Together, these observations prove the claim.
\end{proof}

\noindent {\it Claim 2. Let $u$, $v$ be a vertex pair with $\dist_G(u, v) = 2k +4$, and let $x$ be a vertex
with $\dist_G(u, x) = \dist_G(v, x) = k + 2$. Then, $d_G(x) = 2$.}

\begin{proof}[Proof of claim]
{Suppose for a contradiction that $d_G(x)\geq 3$.}
Let $P$ be a shortest $u,v$-path containing $x$, and let $x'$ be a neighbor of $x$ outside $P$.
By Claim 1, vertex $x'$ has two neighbors on $P$ at distance $2$, say $y$ and $z$.
By symmetry, we may assume that $\dist_G(u,y) <\dist_G(u,z)$.
Note that at least one of $y$ and $z$ is in $N_G[x]$, which implies that each of $y$ and $z$ is at distance at least $k$
from each of $u$, $v$. Let $A$ denote the set of common neighbors of $y$ and $z$ in $G$.
Then, $|A|\ge 2$. Since $G$ is prime, there exists a vertex, say $w$, in $V(G)\setminus A$,
that has both a neighbor, say $a$, and a non-neighbor, say $b$, in $A$.
Since $w\not\in A$, vertex $w$ is non-adjacent to either $y$ or $z$.
Applying Claim 1 to the shortest $u,v$-path, say $\tilde P$, induced by $(V(P)\setminus (N_G(y)\cap N_G(z)))\cup \{a\}$,
we infer that $w$ has a unique neighbor on $\tilde P$ at distance two from $a$.
Call this neighbor $y'$.
Suppose first that $y'$ is a neighbor of $y$.
Then, $w$ is not adjacent to $z$. But now, $(V(\tilde P)\cup\{b,w\})\setminus\{y\}$ induces a copy of
$S_{1,k_1,k_2}$ (centered at $z$) such that $k_1,k_2\ge k$,
{contradiction to} the $S_{1,k,k}$-freeness of $G$.
 The case when $y'$ is a neighbor of $z$, can be handled similarly.
\end{proof}

We split the rest of the proof into two cases.

\noindent{\bf Case 1:} {\it There exist two vertices, say $u$ and $v$, such that $d_G(u)\ge 3$, $d_G(v)\ge 3$
and $\dist_G(u,v)>7k+10$.}

Let $P$ be a shortest $u,v$-path.

\medskip
\noindent {\it Claim 3. For every $x\in V(G)\setminus V(P)$, we have $\dist_G(x,\{u,v\})\leq 2k+2$.}

\begin{proof}[Proof of claim] Suppose for a contradiction that there exists a vertex $x\in V(G)\setminus V(P)$ such that $\dist_G(x,\{u,v\})=2k+3$.

Without loss of generality, we may assume that $d(x,u)\le d(x,v)$.
Let $u'$ be the vertex of $P$ at distance $2k+3$ from $u$.
Let $P' = (x = v_0, v_1,\ldots, v_r = u')$ be a shortest $x,u'$-path in $G$ and let $u'' = v_i$ be the vertex in $V(P')\cap V(P)$ minimizing $i$.

We first show that $\dist_G(u,u'') \leq k+1$. Suppose for a
contradiction  $\dist_G(u,u'') \geq k+2$. Since $u''$ has degree at
least $3$ in $G$, Claim~2 ensures that $u''$ is at distance at
most $k+1$ from $v$. The length $\ell(P')$ of $P'$ can be bounded
from above as follows:
$$\ell(P')=\dist_G(x,u')\leq \dist_G(x,u)+\dist_G(u,u')=2k+3+2k+3=4k+6\,.$$
Consequently,
the length $\ell(P)$ of $P$ can be bounded from above as follows:
$$\ell(P)\le \dist_G(u,u')+\dist_G(u',u'')+\dist_G(u'',v)\le
2k+3+\ell(P')+k+1\le 7k+10\,,$$
a contradiction.

We claim that $u''$ is at distance more than $k+1$ from each endpoint of $P'$.
Indeed, $$\dist_G(u'',x)\geq \dist_G(u,x)-\dist_G(u,u'')\geq 2k+3-(k+1)=k+2$$ and
$$\dist_G(u'',u')\geq \dist_G(u,u')-\dist_G(u,u'')\geq 2k+3-(k+1)= k+2\,.$$
Now, since $P'$ is a shortest $x,u'$-path, Claim~2  implies that the degree of $u''$ in $G$ is equal to $2$, which contradicts the definition of $u''$.

{Note that} here we have used the assumption of Case 1 which guarantees that $u$---and hence also $u''$---is of degree at least $3$.
\end{proof}

Let $A$ denote the set of vertices at distance at most $2k+2$ from $\{u,v\}$.
Claim~3 implies that $G-A$ is a path. Moreover, since $G-A$ is a subpath of $P$,
every internal vertex of {$G-A$} is of degree $2$ in $G$, and
$G$ can be obtained from a graph of bounded diameter by subdividing one of its edges, {as desired}.

\medskip

\noindent{\bf Case 2:} {\it Every two vertices in $G$ of degree at least $3$ are at distance at most $7k+10$ from each other.}

If every vertex of $G$ has degree at most $2$, $G$ is a path or a cycle and hence $G$ is of clique-width at most $4$.
So we may assume that $G$ has a vertex, say $u$, of degree at least~$3$.

Let {$B$} be the set of vertices in $G$ at distance at most $8k+11$
from $u$. Then, $B$ will contain all vertices of $G$ of degree at
least $3$, together with all vertices that are at distance at most
$k+1$ from {some} vertex of degree at least $3$. In particular, the
subgraph $F$ of $G$ induced by $V(G)\setminus A$ consists only of
vertices of degree at most $2$ in $G$; in particular, $F$ is a
disjoint union of paths.

We claim that $F$ {is connected}. Suppose for a contradiction that $F$ {is disconnected}.
Let $s$ and $t$ be two vertices in different components of $F$. Then, any shortest path $P$ between $s$ and $t$ must pass through $B$, and since $B$ induces a connected graph, $P$ will contain a vertex, say $x$, of degree at least $3$.
However, this is a contradiction {to} Claim~2.

Thus, $G$ can be obtained from a graph of bounded diameter by subdividing one of its edges.
\end{proof}

\begin{proof}[Proof of Theorem \ref{thm:bigenic classes-characterization}]
{Let $A$ and $B$ be two connected graphs, and let ${\cal G}$ be the class of
$\{A,B\}$-free graphs.} Suppose that ${\cal G}$ is of power-bounded clique-width. By
Lemma~\ref{lem:bigenic classes-1}\eqref{item1}, we have
$A\in {\cal S}$ and $B\in {\cal T}$. We may assume that
neither of $A$ and $B$ is a path (otherwise, we are done).
Since $A$ and $B$ are connected, $A$ is of the form $S_{i,j,k}$ (for some
$i,j,k\ge 1$), and $B$ is of the form $T_{i',j',k'}$ (for some $i',j',k'\ge 1$).
By Lemma~\ref{lem:bigenic classes-1}\eqref{item3}, we have
that $A$ is of the form $S_{1,j,k}$ (for some $j,k$). Similarly,
Lemma~\ref{lem:bigenic classes-1}\eqref{item4} implies that
$B$ is of the form $T_{1,j',k'}$ (for some $j',k'$).

Suppose now that either one of $A$ and $B$ is a path, or one of
$A$ and $B$ is isomorphic to some $S_{1,j,k}$, and the other one
to some $T_{1,j,k}$. If one of $A$ and $B$ is a path, then
${\cal G}$ is of power-bounded clique-width by
Theorem~\ref{thm:monogenic}.
Otherwise, $A$ is $\{S_{2,2,2}, 2S_{1,1,1}\}$-free and $B$ is $\{T_{2,2,2}, 2T_{1,1,1}\}$-free. Since $A$ is $S_{2,2,2}$-free and $B$ is $T_{2,2,2}$-free, there exists a positive integer $k$ such that $A$ is an induced subgraph of $S_{1,k,k}$ and $B$ is an induced subgraph of $T_{1,k,k}$. { Thus, every graph in ${\cal G}$ is $\{S_{1,k,k},T_{1,k,k}\}$-free.
By Lemma~\ref{lem:bigenic classes-3}, every prime graph in ${\cal G}$ is obtained
from a graph of bounded diameter by subdividing a single edge.
Consequently, Theorem~\ref{thm:sufficient-condition} implies that the set of prime graphs in
${\cal G}$ is of power-bounded clique-width, and
hence ${\cal G}$ is of power-bounded clique-width,
by Proposition~\ref{lem:power-bdd-prime}.}
\end{proof}

\section{Discussion}\label{sec:conclusion}

We conclude the paper by mentioning several possibilities for future investigations related to the topics of this paper.

A main direction for future research is to perform a systematic study of graph classes
$(k,\ell)$-, $(k,\ast)$-, and $(\ast,\ell)$-power-bounded clique-width. Clearly, a graph class of $(\ast,\ell)$-power bounded clique-width is also of  $(\ast,\ell+1)$-power bounded clique-width. The converse fails in general, for instance for $\ell = 1$ (trivially) and also for $\ell = 2$ (for instance, the class of paths is $(1,3)$-power-bounded but not $(\ast,2)$-power-bounded). Theorem~\ref{prop:unbounded-pi} implies that
$(k+1,\ast)$-power-boundedness does not imply $(k,\ast)$-power-boundedness, for any value of $k\ge 1$.
We do not know whether $(k,\ast)$-power-boundedness implies $(k+1,\ast)$-power-boundedness.

Let us say that a graph class ${\cal G}$ of power-bounded clique-width is of {\em strongly power-bounded clique-width} if for every positive integer $k\ge \pi({\cal G})$, the class ${\cal G}^k$ is of bounded clique-width. Proposition~\ref{prop:pw-bdd-cwd-powers} implies that every graph class of bounded clique-width is of strongly power-bounded clique-width.

\begin{problem}\label{prob:strongly-pw-bdd-cwd}
Is it true that every graph class of power-bounded clique-width is also of strongly power-bounded clique-width?
\end{problem}

A positive answer to the above question would follow from a positive answer to the following one.

\begin{problem}
Is there a function $f$ such that for every graph $G$ and every positive integer $k$, we have $\cw(G^{k+1})\le f(\cw(G^k))$?
\end{problem}

On the other hand, a positive resolution to Problem~\ref{prob:strongly-pw-bdd-cwd} would imply a positive answer to the
following problem.

\begin{problem}\label{p3}
Is it true that every graph class ${\cal G}$ of power-bounded clique-width has only finitely many powers of unbounded
clique-width?
\end{problem}

Note that Proposition~\ref{prop:pw-bdd-cwd-powers} implies that for every positive integer $a$, a graph class of $(k,\ast)$-power-bounded clique-width is also of $(ak,\ast)$-power-bounded clique-width. Furthermore, for every graph class ${\cal G}$ for which we proved power-boundedness of the clique-width, our proofs in fact show that ${\cal G}$ has only finitely many powers of unbounded clique-width.

Since many interesting graph classes are hereditary, a closer understanding of the relation between the above notions and hereditary graph classes seems worth of study. For instance, given a hereditary graph class ${\cal G}$ of power-bounded clique-width, one could try to determine all pairs of integers $(k,\ell)$ such that ${\cal G}$ is of $(k,\ell)$-power-bounded clique-width. Furthermore,
what are the properties of the hereditary graph classes ${\cal C}_{k,\ell}$
for  $k\ge 1$ and $\ell\ge 1$, defined
by
$${\cal C}_{k,\ell} =
\{G\mid  \cw(H^k)\le \ell \textrm{ for each induced subgraph $H$
of $G$}\}\,?$$

\subsection*{Acknowledgements}

We are grateful to Nina Chiarelli for comments on an early version
and to the two anonymous referees for their very careful reading of the paper and their
pertinent and useful remarks that lead to an improved presentation of this work.

\begin{sloppypar}
This research was partially supported by the bilateral projects between Argentina and Slovenia, SLO/$11$/$12$ (resp.,~BI-AR/$12$--$14$--$012$), SLO/$11$/$13$ (resp.,~BI-AR/$12$--$14$--$013$). F.~Bonomo, L.~Grippo, and M.~D.~Safe were partially supported by
UBACyT Grant 20020100100980 and 20020130100808BA,
CONICET PIP 112-200901-00178 and 112-201201-00450CO, and
ANPCyT PICT-2012-1324 (Argentina).
M.~Milani\v c~was partially supported the Slovenian Research Agency (I$0$-$0035$, research program P$1$-$0285$ and research projects N$1$-$0032$, J$1$-$5433$, J$1$-$6720$, and J$1$-$6743$).
\end{sloppypar}

\bibliographystyle{abbrv}

\begin{thebibliography}{10}
\providecommand{\url}[1]{{#1}}
\providecommand{\urlprefix}{URL }
\expandafter\ifx\csname urlstyle\endcsname\relax
  \providecommand{\doi}[1]{DOI~\discretionary{}{}{}#1}\else
  \providecommand{\doi}{DOI~\discretionary{}{}{}\begingroup
  \urlstyle{rm}\Url}\fi

\bibitem{MR2539362}
V.~E. Alekseev, V.~Lozin, D.~Malyshev, and M.~Milani{\v{c}}.
\newblock The maximum independent set problem in planar graphs.
\newblock In {\em Mathematical foundations of computer science 2008}, volume
  5162 of {\em Lecture Notes in Comput. Sci.}, pages 96--107. Springer, Berlin,
  2008.

\bibitem{MR1647486}
H.~L. Bodlaender.
\newblock A partial {$k$}-arboretum of graphs with bounded treewidth.
\newblock {\em Theoret. Comput. Sci.}, 209(1-2):1--45, 1998.

\bibitem{MR2079015}
R.~Boliac and V.~Lozin.
\newblock On the clique-width of graphs in hereditary classes.
\newblock In {\em Algorithms and computation}, volume 2518 of {\em Lecture
  Notes in Comput. Sci.}, pages 44--54. Springer, Berlin, 2002.

\bibitem{MR2288329}
A.~Brandst{\"a}dt, F.~F. Dragan, Y.~Xiang, and C.~Yan.
\newblock Generalized powers of graphs and their algorithmic use.
\newblock In {\em Algorithm theory---{SWAT} 2006}, volume 4059 of {\em Lecture
  Notes in Comput. Sci.}, pages 423--434. Springer, Berlin, 2006.

\bibitem{MR1686154}
A.~Brandst{\"a}dt, V.~B. Le, and J.~P. Spinrad.
\newblock {\em Graph classes: a survey}.
\newblock SIAM Monographs on Discrete Mathematics and Applications. Society for
  Industrial and Applied Mathematics (SIAM), Philadelphia, PA, 1999.

\bibitem{DBLP:conf/isaac/BrandstadtLR12}
A.~Brandst{\"a}dt, A.~Leitert, and D.~Rautenbach.
\newblock Efficient dominating and edge dominating sets for graphs and
  hypergraphs.
\newblock In K.-M. Chao, T.~sheng Hsu, and D.-T. Lee, editors, {\em ISAAC},
  volume 7676 of {\em Lecture Notes in Computer Science}, pages 267--277.
  Springer, 2012.

\bibitem{MR1973244}
A.~Brandst{\"a}dt and V.~V. Lozin.
\newblock On the linear structure and clique-width of bipartite permutation
  graphs.
\newblock {\em Ars Combin.}, 67:273--281, 2003.

\bibitem{MR2204113}
L.~S. Chandran and T.~Kavitha.
\newblock The treewidth and pathwidth of hypercubes.
\newblock {\em Discrete Math.}, 306(3):359--365, 2006.

\bibitem{MR3197779}
F.~Cicalese, G.~Cordasco, L.~Gargano, M.~Milani{\v{c}}, and U.~Vaccaro.
\newblock Latency-bounded target set selection in social networks.
\newblock {\em Theoret. Comput. Sci.}, 535:1--15, 2014.

\bibitem{MR2901093}
D.~G. Corneil, M.~Habib, J.-M. Lanlignel, B.~Reed, and U.~Rotics.
\newblock Polynomial-time recognition of clique-width {$\leq 3$} graphs.
\newblock {\em Discrete Appl. Math.}, 160(6):834--865, 2012.

\bibitem{MR2148860}
D.~G. Corneil and U.~Rotics.
\newblock On the relationship between clique-width and treewidth.
\newblock {\em SIAM J. Comput.}, 34(4):825--847 (electronic), 2005.

\bibitem{MR1217156}
B.~Courcelle, J.~Engelfriet, and G.~Rozenberg.
\newblock Handle-rewriting hypergraph grammars.
\newblock {\em J. Comput. System Sci.}, 46(2):218--270, 1993.

\bibitem{MR1739644}
B.~Courcelle, J.~A. Makowsky, and U.~Rotics.
\newblock Linear time solvable optimization problems on graphs of bounded
  clique-width.
\newblock {\em Theory Comput. Syst.}, 33(2):125--150, 2000.

\bibitem{MR1743732}
B.~Courcelle and S.~Olariu.
\newblock Upper bounds to the clique width of graphs.
\newblock {\em Discrete Appl. Math.}, 101(1-3):77--114, 2000.

\bibitem{CIACDabrowskiP14a}
K.~Dabrowski and D.~Paulusma.
\newblock Clique-width of graph classes defined by two forbidden induced
  subgraphs.
\newblock In {\em Algorithms and Complexity - 9th International Conference,
  {CIAC} 2015, Paris, France, May 20-22, 2015, Proceedings}, to appear.

\bibitem{DBLP:conf/lata/DabrowskiHP15}
K.~K. Dabrowski, S.~Huang, and D.~Paulusma.
\newblock Bounding clique-width via perfect graphs.
\newblock In {\em Language and Automata Theory and Applications - 9th
  International Conference, {LATA} 2015, Nice, France, March 2-6, 2015,
  Proceedings}, pages 676--688, 2015.

\bibitem{MR2519936}
M.~R. Fellows, F.~A. Rosamond, U.~Rotics, and S.~Szeider.
\newblock Clique-width is {NP}-complete.
\newblock {\em SIAM J. Discrete Math.}, 23(2):909--939, 2009.

\bibitem{MR2379082}
E.~Fischer, J.~A. Makowsky, and E.~V. Ravve.
\newblock Counting truth assignments of formulas of bounded tree-width or
  clique-width.
\newblock {\em Discrete Appl. Math.}, 156(4):511--529, 2008.

\bibitem{MR0221974}
T.~Gallai.
\newblock Transitiv orientierbare {G}raphen.
\newblock {\em Acta Math. Acad. Sci. Hungar}, 18:25--66, 1967.

\bibitem{MR1861357}
T.~Gallai.
\newblock A translation of {T}. {G}allai's paper: ``{T}ransitiv orientierbare
  {G}raphen'' [{A}cta {M}ath. {A}cad. {S}ci. {H}ungar. {\bf 18} (1967), 25--66;
  {MR}0221974 (36 \#5026)].
\newblock In {\em Perfect graphs}, Wiley-Intersci. Ser. Discrete Math. Optim.,
  pages 25--66. Wiley, Chichester, 2001.

\bibitem{MR1973174}
M.~U. Gerber and D.~Kobler.
\newblock Algorithms for vertex-partitioning problems on graphs with fixed
  clique-width.
\newblock {\em Theoret. Comput. Sci.}, 299(1-3):719--734, 2003.

\bibitem{MR2272239}
O.~Gim{\'e}nez, P.~Hlin{\v{e}}n{\'y}, and M.~Noy.
\newblock Computing the {T}utte polynomial on graphs of bounded clique-width.
\newblock {\em SIAM J. Discrete Math.}, 20(4):932--946, 2006.

\bibitem{MR1792124}
M.~C. Golumbic and U.~Rotics.
\newblock On the clique-width of some perfect graph classes.
\newblock {\em Internat. J. Found. Comput. Sci.}, 11(3):423--443, 2000.

\bibitem{MR1850348}
F.~Gurski and E.~Wanke.
\newblock The tree-width of clique-width bounded graphs without {$K_{n,n}$}.
\newblock In {\em Graph-theoretic concepts in computer science ({K}onstanz,
  2000)}, volume 1928 of {\em Lecture Notes in Comput. Sci.}, pages 196--205.
  Springer, Berlin, 2000.

\bibitem{MR2362959}
F.~Gurski and E.~Wanke.
\newblock Line graphs of bounded clique-width.
\newblock {\em Discrete Math.}, 307(22):2734--2754, 2007.

\bibitem{MR549499}
M.~Habib and M.~C. Maurer.
\newblock On the {$X$}-join decomposition for undirected graphs.
\newblock {\em Discrete Appl. Math.}, 1(3):201--207, 1979.

\bibitem{MR1605685}
T.~W. Haynes, S.~T. Hedetniemi, and P.~J. Slater, editors.
\newblock {\em Domination in graphs}, volume 209 of {\em Monographs and
  Textbooks in Pure and Applied Mathematics}.
\newblock Marcel Dekker Inc., New York, 1998.

\bibitem{MR2972287}
P.~Heggernes, D.~Meister, and U.~Rotics.
\newblock Computing the clique-width of large path powers in linear time via a
  new characterisation of clique-width.
\newblock In {\em Computer science---theory and applications}, volume 6651 of
  {\em Lecture Notes in Comput. Sci.}, pages 233--246. Springer, Heidelberg,
  2011.

\bibitem{MR2071482}
P.~Hell and J.~Huang.
\newblock Interval bigraphs and circular arc graphs.
\newblock {\em J. Graph Theory}, 46(4):313--327, 2004.

\bibitem{MR1676494}
{\"O}.~Johansson.
\newblock Clique-decomposition, {NLC}-decomposition, and modular
  decomposition---relationships and results for random graphs.
\newblock In {\em Proceedings of the {T}wenty-ninth {S}outheastern
  {I}nternational {C}onference on {C}ombinatorics, {G}raph {T}heory and
  {C}omputing ({B}oca {R}aton, {FL}, 1998)}, volume 132, pages 39--60, 1998.

\bibitem{MR2536473}
M.~Kami{\'n}ski, V.~V. Lozin, and M.~Milani{\v{c}}.
\newblock Recent developments on graphs of bounded clique-width.
\newblock {\em Discrete Appl. Math.}, 157(12):2747--2761, 2009.

\bibitem{MR1607726}
M.~G. Karpovsky, K.~Chakrabarty, and L.~B. Levitin.
\newblock On a new class of codes for identifying vertices in graphs.
\newblock {\em IEEE Trans. Inform. Theory}, 44(2):599--611, 1998.

\bibitem{MR1948213}
D.~Kobler and U.~Rotics.
\newblock Edge dominating set and colorings on graphs with fixed clique-width.
\newblock {\em Discrete Appl. Math.}, 126(2-3):197--221, 2003.

\bibitem{MR2774114}
M.~C. Lin, D.~Rautenbach, F.~J. Soulignac, and J.~L. Szwarcfiter.
\newblock Powers of cycles, powers of paths, and distance graphs.
\newblock {\em Discrete Appl. Math.}, 159(7):621--627, 2011.

\bibitem{MR2428563}
V.~Lozin and M.~Milani{\v{c}}.
\newblock Tree-width and optimization in bounded degree graphs.
\newblock In {\em Graph-theoretic concepts in computer science}, volume 4769 of
  {\em Lecture Notes in Comput. Sci.}, pages 45--54. Springer, Berlin, 2007.

\bibitem{MR2854788}
V.~V. Lozin.
\newblock Minimal classes of graphs of unbounded clique-width.
\newblock {\em Ann. Comb.}, 15(4):707--722, 2011.

\bibitem{MR3028197}
V.~V. Lozin and M.~Milani{\v{c}}.
\newblock Critical properties of graphs of bounded clique-width.
\newblock {\em Discrete Math.}, 313(9):1035--1044, 2013.

\bibitem{MR2195310}
V.~V. Lozin and D.~Rautenbach.
\newblock The tree- and clique-width of bipartite graphs in special classes.
\newblock {\em Australas. J. Combin.}, 34:57--67, 2006.

\bibitem{MR1726989}
J.~A. Makowsky and U.~Rotics.
\newblock On the clique-width of graphs with few {$P_4$}'s.
\newblock {\em Internat. J. Found. Comput. Sci.}, 10(3):329--348, 1999.

\bibitem{MR3065112}
M.~Milani{\v{c}}.
\newblock Hereditary efficiently dominatable graphs.
\newblock {\em J. Graph Theory}, 73(4):400--424, 2013.

\bibitem{MR2274712}
L.~S. Moonen and F.~C.~R. Spieksma.
\newblock Exact algorithms for a loading problem with bounded clique width.
\newblock {\em INFORMS J. Comput.}, 18(4):455--465, 2006.

\bibitem{MR1379114}
E.~Prisner.
\newblock {\em Graph dynamics}, volume 338 of {\em Pitman Research Notes in
  Mathematics Series}.
\newblock Longman, Harlow, 1995.

\bibitem{MR0252267}
F.~S. Roberts.
\newblock Indifference graphs.
\newblock In {\em Proof {T}echniques in {G}raph {T}heory ({P}roc. {S}econd
  {A}nn {A}rbor {G}raph {T}heory {C}onf., {A}nn {A}rbor, {M}ich., 1968)}, pages
  139--146. Academic Press, New York, 1969.

\bibitem{MR855559}
N.~Robertson and P.~D. Seymour.
\newblock Graph minors. {II}. {A}lgorithmic aspects of tree-width.
\newblock {\em J. Algorithms}, 7(3):309--322, 1986.

\bibitem{MR902717}
A.~A. Schoone, H.~L. Bodlaender, and J.~van Leeuwen.
\newblock Diameter increase caused by edge deletion.
\newblock {\em J. Graph Theory}, 11(3):409--427, 1987.

\bibitem{MR2354335}
K.~Suchan and I.~Todinca.
\newblock On powers of graphs of bounded {NLC}-width (clique-width).
\newblock {\em Discrete Appl. Math.}, 155(14):1885--1893, 2007.

\bibitem{MR2080095}
I.~Todinca.
\newblock Coloring powers of graphs of bounded clique-width.
\newblock In {\em Graph-theoretic concepts in computer science}, volume 2880 of
  {\em Lecture Notes in Comput. Sci.}, pages 370--382. Springer, Berlin, 2003.

\bibitem{West}
D.~{West}.
\newblock {\em Introduction to Graph Theory}.
\newblock Prentice Hall, 2nd edition, 2000.
\end{thebibliography}

\end{document}